\newtheorem{theorem}{Theorem}[section]
\newtheorem{prop}[theorem]{Proposition}%
\newtheorem{question}[theorem]{Question}
\newtheorem{lemma}[theorem]{Lemma}
\newtheorem{algo}[theorem]{Algorithm}
\newtheorem{conjecture}[theorem]{Conjecture}
\DeclareMathOperator{\dist}{dist}
\begin{document}

\title{Eternal Distance-$2$ Domination in Trees}

\author*[1]{A Clow} \email{alexander\_clow@sfu.ca}
\author[2]{CM van Bommel}\email{cvanbomm@uoguelph.ca}

\affil[1]{Department of Mathematics, Simon Fraser University, Burnaby, BC, Canada}
\affil[2]{Department of Mathematics and Statistics, University of Guelph, Guelph, ON, Canada}

\abstract{
    We consider the eternal distance-2 domination problem, recently proposed by Cox, Meger, and Messinger, on trees. We show that finding a minimum eternal distance-2 dominating set of a tree is linear time in the order of the graph by providing a fast algorithm. Additionally, we characterize when trees have an eternal distance-2 domination number equal to their domination number or their distance-2 domination number, along with characterizing which trees are eternal distance-2 domination critical. We conclude by providing general upper and lower bounds for the eternal distance-k domination number of a graph, as well as constructing an infinite family of trees which meet said upper bound and another which meets the given lower bound.
}

\keywords{Graph theory, Trees, Domination, Eternal Domination}

\pacs[MSC Classification]{05C69, 05C05}

\maketitle

\section{Introduction}

Consider the problem of stationing ambulances throughout a city.  We desire that each location in the city is close to an ambulance, so that paramedics can respond to a call quickly, but we also need the response time to be maintained after a specific ambulance is assigned to a call.  We can model this problem using the notion of \emph{eternal domination} of a graph, which we outline as follows.  For a graph $G$, we start with an initial set of vertices occupied by ``guards'' (or in our application, ambulances) that form a \emph{dominating set} of the graph, that is, every vertex not in the set is adjacent to a vertex in the set.  Then in a series of steps, a vertex is chosen (``attacked'', requires an ambulance), and we require a new set of vertices that include the chosen vertex and for which there is a matching between the previous set and the new set where pairs of vertices are identical or adjacent (intuitively, each guard or ambulance moves to an adjacent location or stays put).  We say that a collection of sets $\{D_1, D_2, \ldots, D_m\}$ eternally dominates $G$ if for any sequence of attacks $A_1, A_2, \ldots$, there is a sequence of sets $D_{i_1}, D_{i_2}, \ldots$ such that $D_{i_j}$ contains the attacked vertex $A_j$, and there is a matching between the vertices of $D_{i_j}$ and $D_{i_{j + 1}}$ such that pairs of vertices are identical or adjacent.  The minimum number of guards required, that is the minimum cardinality of an eternal dominating set, is called the \emph{eternal domination number} of $G$, and is denoted $\gamma_{all}^\infty(G)$.  Here, the subscript all refers to the fact that each guard is able to move in response to an attack, as introduced by Goddard, Hedetniemi, and Hedetniemi~\cite{goddard2005eternal}, in contrast to the version originally introduced by Burger et al.~\cite{burger2004infinite} where only one guard is permitted to move in response to an attack.  

Several works have studied the eternal domination number in recent years, as surveyed by Kloystermeyer and Mynhardt~\cite{klostermeyer2016protecting}.  Goddard, Hedetniemi, and Hedetniemi~\cite{goddard2005eternal} established the domination number as a fundamental lower bound and the clique-star cover number as a fundamental upper bound, established exact values for cliques, complete bipartite graphs, paths, cycles, and Cayley graphs, and determined additional upper bounds in terms of the 2-domination number, independence number, and clique-connected cover number.  Klostermyer and MacGillivray~\cite{klostermeyer2009eternal} provide a linear-time algorithm for determining the eternal domination number of trees.  In a subsequent paper, Klostermeyer and MacGillivray~\cite{klostermeyer2014eternal} characterize trees achieving various upper and lower bounds on the eternal domination number.

The eternal domination number of grids has been extensively studied.  Beaton, Finbow, and MacDonald~\cite{beaton2013eternal} considered the eternal domination number of $4 \times n$ grids. Finbow, Messinger, and van Bommel~\cite{finbow2015eternal} establish upper and lower bounds on the eternal domination number of $3 \times n$ grids.  Bounds on the eternal domination number of $5 \times n$ grids were established by van Bommel and van Bommel~\cite{van2016eternal}.  Lamprou, Martin, and Schewe~\cite{lamprou2019eternally} provided a general upper bound for grid graphs that is optimal asymptotically.  Bounds for strong grid graphs were studied by McInerney, Nisse, and Pérennes~\cite{mc2019eternal} and Gagnon et al.~\cite{gagnon2020method}.

Henning, Kloystermeyer, and MacGillivray~\cite{henning2017bounds} demonstrated a tight upper bound for connected graphs with minimum degree 2, and improved the bound for the class of cubic bipartite graphs.  Cohen et al.~\cite{cohen2018spy} study a generalization called the spy-game, which they show to be NP-hard.  Finbow et al.~\cite{finbow2018note} demonstrated the advantage of allowing multiple guards to occupy the same vertex, and give exponential-time algorithms for calculating eternal domination numbers.  In this work we will allow guards to occupy the same vertex.  Bla{\v{z}}ej, K{\v{r}}i{\v{s}}t’a, and Tom{\'a}{\v{s}}~\cite{blavzej2019m} determine bounds and a linear time algorithm for eternal domination numbers of cactus graphs.

Here, we assume an ambulance is not limited to moving to an adjacent location, rather they are allowed to move to locations up to distance $k$ away. Then we can model this relaxed version of the problem using the notion of \emph{eternal distance-$k$ domination} of a graph, as introduced by Cox, Meger, and Messinger~\cite{cox2023eternal}, defined as follows.  For a graph $G$, we start with an initial set of vertices occupied by ``guards'' (or in our application, ambulances) that form a \emph{distance-$k$ dominating set} of the graph, that is, every vertex not in the set is at distance at most $k$ to a vertex in the set.  Then in a series of steps, a vertex is chosen (``attacked'', requires an ambulance), and we require a new set of vertices that include the chosen vertex and for which there is a matching between the previous set and the new set where pairs of vertices are within distance $k$ (intuitively, an ambulance responds to the call, and the remaining ambulances are redistributed across the city to maintain their response time.  We say that a collection of sets $\{D_1, D_2, \ldots, D_m\}$ eternally dominates $G$ if for any sequence of attacks $A_1, A_2, \ldots$, there is a sequence of sets $D_{i_1}, D_{i_2}, \ldots$ such that $D_{i_j}$ contains the attacked vertex $A_j$, and there is a matching between the vertices of $D_{i_j}$ and $D_{i_{j + 1}}$ such that pairs of vertices are within distance $k$.  The minimum number of guards required, that is the minimum cardinality of an eternal distance-$k$ dominating set is called the \emph{eternal distance-$k$ dominaton number} of $G$, and is denoted $\gamma_{all, k}^\infty(G)$.  Consider Figure~\ref{fig:Def Example} for two graphs whose domination, distance-$2$ domination, eternal domination, and eternal distance-$2$ domination numbers are given. Cox, Meger, and Messinger~\cite{cox2023eternal} considered general bounds on, the computational complexity of computing, and the exact values for small classes, of $\gamma_{all,k}^\infty$. Additionally, they developed reductions for trees and present the following open problems:

\begin{figure}
    \centering
    \includegraphics[scale = 0.125]{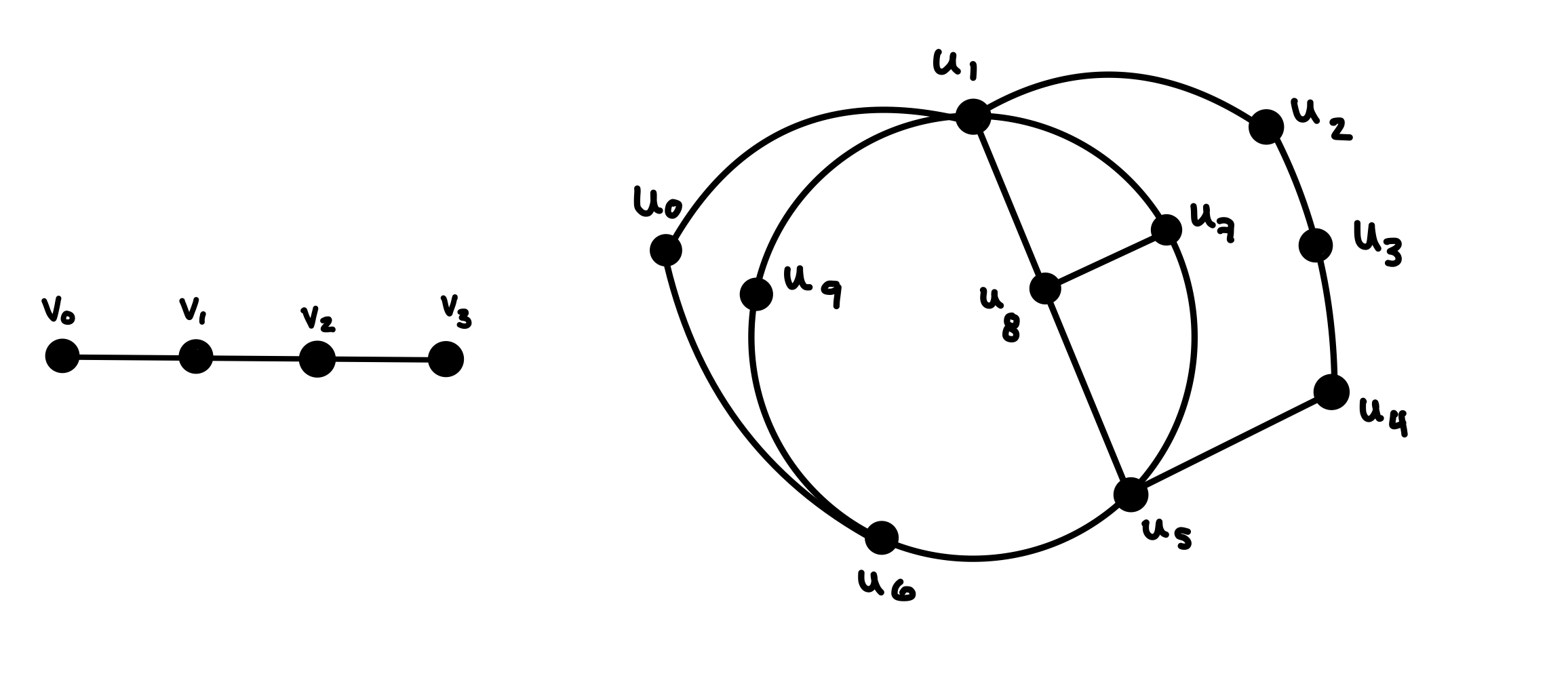}
    \caption{Two graphs $G$ (left) and $H$ (right) such that $\gamma(G) = 2 = \gamma_{all,1}^\infty(G)$, $\gamma_2(G) = 1 < \gamma_{all,2}^\infty(G) = 2$, $\gamma(H) = 3 < \gamma_{all,1}^\infty(H) = 4$, and $\gamma_2(H) = 2 = \gamma_{all,2}^\infty(H)$.
    }
    \label{fig:Def Example}
\end{figure}

\begin{question} \label{q-min}
    For what class of graphs, $\mathcal{G}$, is $\gamma_k(G) = \gamma_{all, k}^\infty(G)$ for all $G \in \mathcal{G}$?
\end{question}

\begin{question} \label{q-extreme}
    Let $\mathcal{G}_{n, m}$ be the family of simple graphs on $n$ vertices and $m$ edges.  For a fixed $n$ and $m$, what are the graphs with the smallest eternal distance-$k$ domination number, or largest eternal distance-$k$ domination number?
\end{question}

\begin{question} \label{q-possible}
    Given a fixed $n$ and fixed $k$, what possible values can $\gamma_{all, k}^\infty$ take on, for graphs $G$ of order $n$?
\end{question}

\begin{question} \label{q-max}
    Which graphs $G$ have the property that $\gamma_{all, 2}^\infty(G) = \gamma(G)$?  Can we characterize the trees with this property?
\end{question}

\begin{question} \label{q-pn}
    Suppose that for every minimum dominating set of a tree $T$, each vertex in the dominating set has at least two private neighbours.  Then is $\gamma_{all, 2}^\infty(T) = \gamma(T)$?
\end{question}

In this work, we focus primarily on the case of $k = 2$ in trees.  We develop sufficient reductions to provide an algorithm to determine the eternal distance-2 domination number of trees in linear time.  Our main approach is to consider the low-diameter subgraphs that are most leaf-like; we develop this idea formally in Section~3.  Using these reductions, we provide a linear time algorithm for the computation of the eternal distance-2 domination number of trees.  We then provide characterizations of trees for which $\gamma_{all, 2}^\infty = \gamma$, addressing Question~\ref{q-max} for the class of trees and allowing us to answer Question~\ref{q-pn} in the negative. Additionally, we characterize the trees that are eternal distance-$2$ domination critical, as well as the trees for which $\gamma_{all, 2}^\infty = \gamma_2$, partially addressing Question~\ref{q-min}.  Finally, we present extremal families of trees for eternal distance-$k$ domination, offering a solution to Question~\ref{q-extreme} in the case $m=n-1$.

\section{Preliminaries}

We provide the following results relating domination parameters that will be used throughout.  We begin with the following bound on the eternal distance-$k$ domination number in terms of distance domination, as observed by Cox, Meger, and Messinger~\cite{cox2023eternal}.

\begin{prop} \cite{cox2023eternal}
    For any graph $G$ and integer $k \ge 2$,
    \[ \gamma_k(G) \le \gamma_{all, k}^\infty(G) \le \gamma_{\left \lfloor \frac{k}{2} \right \rfloor}(G).\]
\end{prop}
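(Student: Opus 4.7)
The plan is to prove the two inequalities separately. The lower bound $\gamma_k(G) \le \gamma_{all,k}^\infty(G)$ is immediate from the definitions: the initial configuration in any valid eternal distance-$k$ dominating family must be a distance-$k$ dominating set, since otherwise the first attack on an undominated vertex could not be answered, and so the number of guards is at least $\gamma_k(G)$.

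For the upper bound $\gamma_{all,k}^\infty(G) \le \gamma_{\lfloor k/2 \rfloor}(G)$, write $\ell = \lfloor k/2 \rfloor$ and let $D$ be a minimum distance-$\ell$ dominating set of $G$. I would describe an explicit defender strategy using $|D|$ guards that maintains a ``home-and-one-excursion'' invariant: after every attack the guards occupy a configuration of the form $(D \setminus \{u\}) \cup \{v\}$ for some $u \in D$ and some $v$ with $d(u,v) \le \ell$ (with $v = u$ encoding that the guards sit at the home set $D$), and this configuration contains the most recently attacked vertex. Every such configuration is itself distance-$k$ dominating: given $w \in V(G)$, pick $u' \in D$ with $d(w,u') \le \ell$; if $u' \ne u$ we are done, and otherwise $d(w,v) \le d(w,u) + d(u,v) \le 2\ell \le k$.

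The main verification is the transition step. Suppose the current configuration is $(D \setminus \{u_i\}) \cup \{v_i\}$ and the next attack occurs at $v$; pick any $u_j \in D$ with $d(u_j,v) \le \ell$. If $j = i$, move the single guard at $v_i$ to $v$, which is legal because $d(v_i,v) \le d(v_i,u_i) + d(u_i,v) \le 2\ell \le k$ by the triangle inequality. If $j \ne i$, pair the guard at $v_i$ with $u_i$, the guard at $u_j$ with $v$, and every other guard with itself; each moved guard travels distance at most $\ell \le k$, and since $u_i \in D \setminus \{u_j\}$ and $D \setminus \{u_i, u_j\}$ appears in both configurations, the matching is well-defined. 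In either case the new configuration is $(D \setminus \{u_j\}) \cup \{v\}$, contains $v$, and satisfies the invariant, closing the induction and giving $\gamma_{all,k}^\infty(G) \le |D| = \gamma_\ell(G)$.

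The only place where any care is required is the matching when $j \ne i$ in degenerate situations, for instance when $v_i$ coincides with some $u_l \in D$ or when $v$ is already in the current configuration, but these are handled uniformly by treating the configurations as multisets of guard positions rather than as vertex sets, which the paper explicitly allows. I do not expect any serious obstacle; the argument is essentially a triangle-inequality computation once the right invariant family is chosen.
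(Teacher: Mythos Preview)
Your proof is correct, but note that the paper does not actually prove this proposition: it is quoted as a result from \cite{cox2023eternal} and stated without proof in the Preliminaries section. So there is no ``paper's own proof'' to compare against.

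That said, your argument is the standard one and is presumably close to what appears in the cited source. The lower bound is immediate as you say. For the upper bound, your ``home-and-one-excursion'' invariant is exactly the right idea: the key point is that a single displaced guard at distance at most $\ell = \lfloor k/2 \rfloor$ from its home can either (i) reach the next attack directly via the triangle inequality, total distance at most $2\ell \le k$, or (ii) return home while a fresh guard leaves, each moving at most $\ell$. Your verification that every such configuration remains distance-$k$ dominating is correct, and your remark about treating configurations as multisets handles the degenerate overlaps cleanly, consistent with the paper's convention that multiple guards may share a vertex.
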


Equality between $\gamma(G)$ and $\gamma_2(G)$ for trees was characterized by Raczek~\cite{raczek2011graphs}, which forces equality of the eternal distance-2 domination number.  We state the characterization here and consider the question of equality of the bounds in later sections.  Let $\mathbb{T}$ be the family of all trees $T$ that can be obtained from sequence $T_1, \ldots, T_j$ ($j \ge 1)$ of trees such that $T_1$ is the path $P_2$ and $T = T_j$, such that $T_{i+1}$ can be obtained recursively from $T_{i}$ by the operation $\mathbb{T}_1$, $\mathbb{T}_2$, or $\mathbb{T}_3$:
\begin{itemize}
    \item {\bf Operation} $\mathbb{T}_1$.  The tree $T_{i + 1}$ is obtained from $T_i$ by adding a vertex $x_1$ and the edge $x_1 y$ where $y \in V(T_i)$ is a stem vertex of $T_i$.
    \item {\bf Operation} $\mathbb{T}_2$.  The tree $T_{i + 1}$ is obtained from $T_i$ by adding a path $(x_1, x_2, x_3)$ and the edge $x_1 y$ where $y \in V(T_i)$ is neither a leaf nor a stem vertex in $T_i$.
    \item {\bf Operation} $\mathbb{T}_3$.  The tree $T_{i + 1}$ is obtained from $T_i$ by adding a path $(x_1, x_2, x_3, x_4)$ and the edge $x_1 y$ where $y \in V(T_i)$ is a stem vertex in $T_i$.
\end{itemize}
Additionally, let $P_1$ belong to $\mathbb{T}$.  Then the following is established.

\begin{theorem}[\cite{raczek2011graphs}]\label{lemma: gamma=gamma_2}
    Let $T$ be a tree. Then $T \in \mathbb{T}$ if and only if $\gamma(T) = \gamma_2(T)$.
\end{theorem}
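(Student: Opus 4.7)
The plan is to prove the two implications separately, both by induction.

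For the forward direction ($T \in \mathbb{T} \Rightarrow \gamma(T) = \gamma_2(T)$), I would induct on the length $j$ of a construction sequence for $T$. Both base cases $T = P_1$ and $T = P_2$ satisfy $\gamma = \gamma_2 = 1$. For the inductive step, assuming $\gamma(T_i) = \gamma_2(T_i)$, I check that each of the three operations preserves the equality. Operation $\mathbb{T}_1$ leaves both parameters unchanged: since $y$ is a stem of $T_i$, a standard reduction lets us take any minimum (distance-$2$) dominating set to contain $y$, and $y$ already (distance-$2$) dominates the new leaf $x_1$. Operations $\mathbb{T}_2$ and $\mathbb{T}_3$ each increase both $\gamma$ and $\gamma_2$ by exactly one: the upper bound is witnessed by adjoining a single central vertex of the pendant path ($x_2$ for $\mathbb{T}_2$, $x_3$ for $\mathbb{T}_3$) to a minimum set of $T_i$; the matching lower bound follows because the new pendant leaf $x_3$ (resp.\ $x_4$) forces at least one additional vertex inside the pendant path into every (distance-$2$) dominating set, and this vertex cannot also handle the requirements of $T_i$.

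The harder direction, that $\gamma(T) = \gamma_2(T)$ forces $T \in \mathbb{T}$, I would prove by strong induction on $|V(T)|$. Base cases $|V(T)| \le 2$ are immediate. For $|V(T)| \ge 3$, fix a longest path $v_0, v_1, \ldots, v_d$; by maximality $v_d$ is a leaf and $v_{d-1}$ is a stem. I then case-split on the local structure around $v_{d-1}, v_{d-2}, v_{d-3}$. If $v_{d-1}$ has a second leaf neighbour, deleting that leaf inverts $\mathbb{T}_1$. If instead the subtree rooted at $v_{d-2}$ away from $v_{d-3}$ is exactly the path $(v_{d-2}, v_{d-1}, v_d)$ and $v_{d-3}$ is neither a leaf nor a stem of the resulting tree, deleting that pendant $P_3$ inverts $\mathbb{T}_2$. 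In the remaining configuration $v_{d-3}$ is itself a stem, and by using the maximality of the chosen path one argues that a pendant $P_4$ can be peeled off from $v_{d-3}$, inverting $\mathbb{T}_3$.

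To close the induction one must verify that the reduced tree $T'$ still satisfies $\gamma(T') = \gamma_2(T')$; this is the main obstacle. The concern is that the reduction might create a strict gap $\gamma_2(T') < \gamma(T')$, placing $T'$ outside $\mathbb{T}$. To rule this out, for each reverse operation I would show that any hypothetical distance-$2$ dominating set $D_2'$ of $T'$ with $|D_2'| < \gamma(T')$ can be extended by a controlled number of vertices (matching the increase that the forward operation produces on $\gamma$) to a distance-$2$ dominating set of $T$ strictly smaller than $\gamma(T)$, contradicting the hypothesis. The structural hypothesis on $y$ in each operation (stem for $\mathbb{T}_1$ and $\mathbb{T}_3$, neither leaf nor stem for $\mathbb{T}_2$) is precisely what guarantees that the extra guards required to cover the reattached pendant path are not absorbed elsewhere in $T'$, so that the discrepancy transfers faithfully from $T'$ to $T$.
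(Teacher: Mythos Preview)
The paper does not prove this statement: Theorem~\ref{lemma: gamma=gamma_2} is quoted from Raczek~\cite{raczek2011graphs} and used as a black box, so there is no in-paper proof to compare your attempt against.

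That said, a few remarks on your outline itself. The overall two-way induction is the standard route and matches Raczek's original argument in spirit. In the forward direction, however, your claim that the forced pendant vertex ``cannot also handle the requirements of $T_i$'' is not literally correct for distance-$2$ domination: under $\mathbb{T}_2$, the vertex $x_1$ is within distance~$2$ of $y$ and of every neighbour of $y$ in $T_i$, so it \emph{can} cover part of $T_i$. Getting the lower bound $\gamma_2(T_{i+1}) \ge \gamma_2(T_i)+1$ therefore needs a replacement argument (shift the pendant guard to $y$) together with the inductive hypothesis $\gamma(T_i)=\gamma_2(T_i)$, not just a disjointness observation.

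In the reverse direction your case split is incomplete. After ruling out a second leaf at $v_{d-1}$, one must still handle the possibilities that $v_{d-2}$ has degree larger than~$2$ (it could have other stem or leaf children, by maximality of the path), and that after deleting the pendant $P_3$ the vertex $v_{d-3}$ becomes a \emph{leaf} rather than a stem. Your trichotomy ``$\mathbb{T}_1$ / $\mathbb{T}_2$ / remaining case is $\mathbb{T}_3$'' skips these configurations; in Raczek's proof several of them are shown to be impossible precisely because they would force $\gamma_2(T)<\gamma(T)$, and establishing that is where most of the work lies.
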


We next note the exact value for the eternal distance-$k$ domination number has been calculated for paths by Cox, Meger, and Messinger~\cite{cox2023eternal}. In Section~7 we will see that paths are an extremal family of graphs in terms of eternal distance-$k$ domination.

\begin{theorem} \cite{cox2023eternal}
    For $n \ge 1$ and $k \ge 1$, $\gamma_{all, 2}^\infty(P_n) = \left \lceil \frac{n}{k + 1} \right \rceil$.
\end{theorem}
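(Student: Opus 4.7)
The plan is to establish $\gamma_{all,k}^\infty(P_n) = g$ with $g := \lceil n/(k+1) \rceil$ by proving the two inequalities separately. Note that the bound $\gamma_{\lfloor k/2 \rfloor}(P_n)$ from the earlier proposition equals $g$ only when $k$ is even, so a direct strategy is needed to handle all $k$.

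For the upper bound, I would partition $V(P_n)$ into consecutive blocks $R_1, \ldots, R_g$ with $|R_i| = k+1$ for $i < g$ and $1 \le |R_g| \le k+1$, placing one guard initially in each block. When $v_j$ is attacked, the unique block $R_i$ containing $v_j$ has its guard move to $v_j$, while all other guards remain in place. Each block has diameter at most $k$, so the moving guard travels distance at most $k$, and the matching for the remaining guards is the identity. Since each block always retains a guard and has diameter at most $k$, every vertex stays within distance $k$ of some guard. Hence $g$ guards suffice.

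For the lower bound, I would use the attack sequence $A_j := v_{p_j}$ with $p_j = 1 + (j-1)(k+1)$ for $j = 1, \ldots, g$, noting that $p_g \le n$ by a routine check. Writing $G^{(j)}$ for a guard at $v_{p_j}$ immediately after attack $j$, I would trace $G^{(i)}$ forward through the matchings between consecutive configurations: over the $j - i$ subsequent steps, its position can change by at most $(j-i)k$ in total, whereas $p_j - p_i = (j-i)(k+1) > (j-i)k$, so $G^{(i)}$ is not at $v_{p_j}$ at time $j$. Therefore $G^{(i)} \neq G^{(j)}$ whenever $i < j$, giving $g$ pairwise distinct guards and hence $\gamma_{all,k}^\infty(P_n) \ge g$.

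The main technical point is the guard-tracking step: each matching between consecutive configurations allows one to follow an individual guard through time, with its cumulative displacement bounded by the number of moves times $k$. The spacing $k+1$ between attack positions exceeds this per-step bound by exactly one, which is precisely what forces the guards $G^{(1)}, \ldots, G^{(g)}$ to be distinct. The remaining verifications—that the partition strategy maintains a distance-$k$ dominating set throughout and that $p_g \le n$—reduce to elementary arithmetic.
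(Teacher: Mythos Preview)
The paper does not give a proof of this statement; it is quoted verbatim as a result of Cox, Meger, and Messinger \cite{cox2023eternal}, so there is no in-paper argument to compare against. (Note also the typo in the displayed statement: $\gamma_{all,2}^\infty$ should read $\gamma_{all,k}^\infty$, which you have tacitly corrected.)

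Your argument is correct on both halves. The block-partition strategy for the upper bound is the natural one, and your invariant---each block always contains its own guard, and each block has diameter at most $k$---immediately gives both the move legality and the distance-$k$ domination after every step. For the lower bound, the guard-tracking argument is sound: once a defender strategy is fixed, the matchings between consecutive configurations let you follow a specific guard through time, and the displacement bound $(j-i)k < (j-i)(k+1) = p_j - p_i$ cleanly forces $G^{(1)},\dots,G^{(g)}$ to be pairwise distinct. The check $p_g = 1 + (g-1)(k+1) \le n$ follows from $g - 1 < n/(k+1)$. One small point worth making explicit in a write-up: since multiple guards may share a vertex, ``the guard at $v_{p_j}$'' should be read as ``some guard at $v_{p_j}$,'' but this does not affect the distinctness conclusion.
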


Finally, to characterize the trees that are eternal distance-2 domination critical, we use the concept of the 1-sum of two graphs to build the family.  For graphs $G$ and $H$, the \emph{1-sum} of $G$ and $H$ at vertex $u$ of $G$ and $v$ of $H$ is the graph formed by taking the disjoint union of graphs $G$ and $H$ and identifying vertices $u$ and $v$.

\section{Reductions \& Complexity for Eternal Distance-$2$ Domination in Trees}

Let $T = (V,E)$ be a tree. We say $v \in V$ is a $0$-leaf iff $v$ is a leaf, and for $k>0$, we say $v$ is a $k$-leaf iff $v$ is adjacent to a $(k-1)$-leaf and all but perhaps one of the neighbours of $v$ are $t$-leaves, where $t<k$. For a $1$-leaf, $u$, let $L(u)$ be the set of all leaves adjacent to $u$, and let $L[u] = L(u) \cup \{u\}$. Given a $k$-leaf, $v \in V$, $k>1$, let $L(v) = \cup_{u \in N(v)} L[u]$, where $u$ is a $t$-leaf and $t<k$, and let $L[v] = L[v] \cup \{v\}$.  We begin by demonstrating that a tree with a sufficiently large diameter contains a $k$-leaf.

\begin{lemma}\label{Lemma: Big Diameter Implies k-leaf}
    Let $T = (V,E)$ be a tree. If the diameter of $T$ is at least $2k$, then there exists a $v \in V$ which is a $k$-leaf.
\end{lemma}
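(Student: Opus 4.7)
The plan is to locate a $k$-leaf along a diametral path. Let $P = v_0 v_1 \cdots v_d$ be a longest path in $T$, so $d = \operatorname{diam}(T) \ge 2k$; when $k = 0$ the lemma is immediate since any nonempty tree has a leaf, so I would assume $k \ge 1$, whence $v_{k+1}$ exists. I claim that $v_k$ is a $k$-leaf of $T$. The key structural consequence of $P$ being longest is the following: letting $T_k$ denote the component of $T - v_k v_{k+1}$ that contains $v_k$, any $u \in T_k$ with $\dist(v_k, u) > k$ would extend through $v_k v_{k+1} \cdots v_d$ to a path of length exceeding $d$, contradicting the maximality of $P$. Hence the subtree $T_k$ has depth at most $k$ from $v_k$, and in fact exactly $k$ since $v_0 \in T_k$ lies at distance $k$.

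I would then prove the following auxiliary claim by induction on $d' \ge 0$: if $u \in V(T)$ has a neighbor $p$ designated as its parent and the component of $T - up$ containing $u$ has depth exactly $d'$ from $u$, then $u$ is a $d'$-leaf of $T$. For $d' = 0$ the vertex $u$ is a leaf of $T$, hence a $0$-leaf. For $d' \ge 1$, each non-parent neighbor $c$ of $u$ is the root of its own subtree (with $u$ playing the role of $c$'s parent) of depth at most $d' - 1$, and at least one such child $c^\star$ realizes depth exactly $d' - 1$. By the inductive hypothesis, each such child $c$ is a $t$-leaf for some $t \le d' - 1 < d'$, and in particular $c^\star$ is a $(d'-1)$-leaf. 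Thus $u$ is adjacent to the $(d'-1)$-leaf $c^\star$, and every neighbor of $u$ other than (at most) $p$ is a $t$-leaf with $t < d'$, matching the definition of a $d'$-leaf.

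Applying the auxiliary claim to $u = v_k$ with parent $v_{k+1}$ and $d' = k$ then yields the lemma. The main subtlety to navigate is that the recursive definition of $k$-leaf interrelates the classification of every non-parent neighbor of the chosen vertex simultaneously; encoding the invariant via rooted-subtree depth, as in the auxiliary claim, is what lets the induction close cleanly and reduces the remaining steps to routine verification.
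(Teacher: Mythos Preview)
Your proof is correct and follows essentially the same approach as the paper: both select a longest path $P=v_0v_1\cdots v_d$ and argue that $v_k$ is a $k$-leaf because maximality of $P$ bounds the depth of the subtree hanging off $v_k$ in the direction of $v_0$. You supply a cleaner, explicit induction on depth to verify that the recursive definition of $k$-leaf is met, whereas the paper compresses this into a one-line contradiction; your auxiliary claim is a useful addition rather than a different route.
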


\begin{proof}
Let $T = (V,E)$ be a tree with diameter at least $2k$.  Let $P = v_0 v_1 v_2 \ldots v_m$ be a longest path in $T$.  If $v_k$ is not a $k$-leaf, then there exists a path $Q = w_0 w_1 w_2 \ldots w_t x_k$, where $t \ge k$.  But then if we replace the vertices $v_0 \ldots v_k$ of $P$ with $Q$, we obtain a longer path in $T$, a contradiction.  Hence, $T$ contains a $k$-leaf.
\end{proof}

Next, we demonstrate a reduction strategy for computing the eternal distance-2 domination number based on 2-leaves.

\begin{lemma}\label{Lemma: k-leaf Reduction}
    Let $T$ be a tree with diameter at least $4$. If $v$ is a $2$-leaf in $T$ such that $T[L(v)]$ has diameter $2$, let $T' = T - L[v]$, otherwise, let $T' = T - L(v)$. Then, $\gamma_{all,2}^\infty(T) = \gamma_{all,2}^\infty(T') + 1$.
\end{lemma}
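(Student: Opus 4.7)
The plan is to prove equality by matching upper and lower bounds. Throughout, write $S = L[v] = L(v) \cup \{v\}$; in both cases of the lemma the set $V(T) \setminus V(T')$ is a subset of $S$, and $S$ will play the role of the ``dedicated region'' serviced by a single extra guard.

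For the upper bound $\gamma_{all,2}^\infty(T) \leq \gamma_{all,2}^\infty(T') + 1$, the plan is to take an optimal strategy $\sigma'$ for $T'$ with $k = \gamma_{all,2}^\infty(T')$ guards and augment it by a single extra guard based in $S$. In the diameter-$2$ case, $T[L[v]]$ has diameter at most $2$, so the extra guard, stationed at the unique $1$-leaf $u \in N(v)$, can move between any two vertices of $S$ in a single step: on an attack inside $L[v]$ the extra guard moves directly to the attacked vertex while the primaries stay, and on an attack in $T'$ the primaries follow $\sigma'$ while the extra returns to $u$. In the otherwise case the extra guard is based at $v$; on an attack at $A \in L(v)$ the primaries transition via $\sigma'$ \emph{as if the attack were at $v$}, thereby guaranteeing that $v$ lies in the new primary configuration, and the extra guard moves to $A$, either directly (if the distance is at most $2$) or, when two consecutive attacks strike leaves in distinct pendant branches at $v$, by a swap in which the primary $v$-guard moves to $A$, the extra returns to $v$, and we relabel the two roles so that the primary state is again a valid $\sigma'$-state.

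For the lower bound $\gamma_{all,2}^\infty(T) \geq \gamma_{all,2}^\infty(T') + 1$, the key structural observation is that for any leaf $\ell$ adjacent to a $1$-leaf $u \in N(v)$, the closed distance-$2$ neighbourhood $N_T^2[\ell]$ is contained in $S$, so every eternal distance-$2$ dominating configuration of $T$ must contain at least one guard inside $S$. Given any strategy $\sigma$ for $T$ with $k$ guards, I would construct a strategy $\sigma^*$ for $T'$ with $k - 1$ guards by, for each configuration $D$ of $\sigma$, replacing every guard in $L(v)$ by a guard at $v$ (otherwise case) or at the neighbour $w$ of $v$ in $T'$ (diameter-$2$ case), and deleting one of these collapsed guards so that $|D^*| = k - 1$. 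Any $T'$-vertex dominated in $T$ by a guard in $S$ must lie within $T'$-distance $2$ of $v$ (respectively $w$), so the projected configuration still distance-$2$ dominates $T'$; the matching condition transfers because $T'$-distances between vertices of $T'$ coincide with their $T$-distances.

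The main obstacle is ensuring consistency of the lower-bound projection across consecutive states: the per-guard matching of $\sigma$ must descend to a per-guard matching of $\sigma^*$, which can fail if the number of $\sigma$-guards inside $S$ fluctuates from one step to the next. I expect to resolve this by first modifying $\sigma$, without increasing its cardinality, so that \emph{exactly} one guard occupies $S$ at every step; this is possible because any additional $S$-guards are redundant for dominating $S$ (which is already dominated by a single central guard at $v$ or $u$) and can be relocated to $w$, or to a neighbour of $v$ in $T'$, without breaking either the domination or the existing matchings. A secondary subtlety is the swap step in the upper bound's otherwise case, which is precisely what motivates the convention of routing every $L(v)$-attack through $\sigma'$ as an attack at $v$: this convention is exactly what keeps a primary guard at $v$ ready to perform the swap whenever consecutive $L(v)$-attacks lie in different branches.
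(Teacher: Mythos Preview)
Your proposal is correct and follows essentially the same approach as the paper: the upper-bound strategy in both cases (a dedicated extra guard in $L[v]$, with the ``simulate an attack at $v$'' convention and the swap mechanism in the diameter-$\geq 3$ case) matches the paper's construction exactly, and your lower-bound argument (modify the strategy so exactly one guard sits in $S$ at all times, then project to $T'$) is a more explicit formalization of the paper's informal claim that it is ``never advantageous'' for the $S$-guard to leave, since any guard that would replace it could instead defend the attacked vertex directly. Your treatment of the lower bound is in fact more careful than the paper's, which handles Case~2 only by analogy to Case~1.
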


\begin{proof}
Let $T = (V,E)$ be a tree with diameter at least $2k$ and let $v \in V$ be a $2$-leaf. Let $T'$ be defined as in the statement of the Lemma. We will show $\gamma_{all,2}^\infty(T) = \gamma_{all,2}^\infty(T') + 1$ by demonstrating that $\gamma_{all,2}^\infty(T) \leq  \gamma_{all,2}^\infty(T') + 1$ and $\gamma_{all,2}^\infty(T) \geq \gamma_{all,2}^\infty(T') + 1$.

To begin observe that as $v$ is a $2$-leaf and $T$ is a tree, there must be at least $1$ guard in $L[v]$ in every distance-$2$ dominating set of $T$. Otherwise, there exists a vertex in $L(v)$ which is not distance-$2$ dominated. Hence, for every eternally distance-$2$ dominating set there is at least $1$ guard in $L[v]$. 

\vspace{0.25cm}
\underline{Case.1:} $T[L(v)]$ has diameter $2$. Then for all vertices $u \in L[v]$, a guard at $u$ is within distance $2$ of every other vertex in $ L[v]$. Hence, the one guard which must be in $L[v]$ can eternally distance-$2$ dominate $L[v]$. This implies that $\gamma_{all,2}^\infty(T) \leq  \gamma_{all,2}^\infty(T') + 1$, as one extra guard is sufficient to guard $L[v]$, while $\gamma_{all,2}^\infty(T')$ is sufficient to guard the rest of the graph.

Note that $\gamma_{all,2}^\infty(T) \geq  \gamma_{all,2}^\infty(T') + 1$ follows directly from the fact there must always be a guard in $ L[v]$. This is because if the guard $g_1$ in $ L[v]$ were to ever move to protect a vertex $x \in V\setminus L[v]$, then another guard, $g_2$, would have to enter $L[v]$ to remain a distance-$k$ dominating set. Given $T$ is a tree, this is never advantageous for the guards, as if $g_2$ is within distance $2$ of $L[v]$ while not being in $L[v]$, this implies $g_2$ is also within distance $2$ of $x$, so $g_2$ can move directly to $x$ and $g_1$ can remain in $L[v]$.

\vspace{0.25cm}
\underline{Case.2:} $T[L(v)]$ has diameter at least $3$. Then we will demonstrate $\gamma_{all,2}^\infty(T) \leq  \gamma_{all,2}^\infty(T') + 1$ by providing a winning strategy with for the guards using exactly $\gamma_{all,2}^\infty(T') + 1$ guards. Place $\gamma_{all,2}^\infty(T')$ guards on $V \setminus L(v)$ and let them proceed as if playing on $T'$, next place an extra guard, $g_v$, on $v$. If the attacker attacks vertices in $V \setminus L(v)$, then let the guards $\gamma_{all,2}^\infty(T')$ proceed as if playing on $T'$, while $g_v$ does not move. 

If a vertex in $L(v)$ is attacked let $g_v$ defend against the attack, while the other $\gamma_{all,2}^\infty(T')$ guards respond as if $v$ was attacked in $T'$. This will place a guard $g$ on $v$. If the next attack is also on $L(v)$, then let $g$ move to respond to the attack, while $g_v$ returns to $v$ and the guards in $V \setminus L[v]$ do not move. As long as the attacker attacks vertices in $L(v)$, guards $g$ and $g_v$ continue this strategy where the guard at $v$ responds, while the other moves to $v$. Should the attacker attack a vertex outside of $L(v)$, then the guard currently at $v$, say $g$ without loss of generality, assumes the role of one of the $\gamma_{all,2}^\infty(T')$ guards protecting $T'$, while the other guard (again without loss of generality) $g_v$, returns to $v$. As this returns the game to its initial state, $\gamma_{all,2}^\infty(T) \leq  \gamma_{all,2}^\infty(T') + 1$.

The fact that $\gamma_{all,2}^\infty(T) \geq  \gamma_{all,2}^\infty(T') + 1$ follows by a similar argument as case 1, where it is never advantageous for the guard $g_v$ whose job it is to protect $L[v]$ to protect a vertex $x$ outside of $L[v]$, as any guard $g$ which begins outside of $L[v]$ and would take on the role of defending $L[v]$ could simply protect $x$ directly.

This completes the proof as we have shown that $\gamma_{all,2}^\infty(T) = \gamma_{all,2}^\infty(T') + 1$ as desired in both cases.
\end{proof}

Note that the approach used in Lemma~\ref{Lemma: k-leaf Reduction} does not easily extend for $\gamma_{all,k}^\infty$ when $k>2$. For an example of this see Figure~\ref{fig:k>2 problem}. Note that when $k>2$, we may conclude that $\gamma_{all,k}^\infty(T) \leq \gamma_{all,k}^\infty(T') +1$, however we cannot guarantee that this bound reaches equality, because the guard $g_2$ (see proof of case 1) might have distance greater than $k$ to $x$. This is because supposing $g_1$ starts at $v$ and $g_2$ starts at $z$, $\dist(v, x) \leq k$ implies that the neighbour of $v$ not in $L[v]$, call it $y$, has $\dist(x,y) \leq k-1$, while, $\dist(z,y) \leq k-1$. Hence, $\dist(z,x) \leq 2k-2$ is an upper bound that cannot be improved. For $k \leq 2$ this is fine, as $k \geq 2k-2$, however for $k>2$, $k< 2k-2$ implies $g_2$ may be unable to reach $x$ in a single move. Notice that Figure~\ref{fig:k>2 problem} can be generalized to all cases $k \geq 3$ by appending path of length $k-3$ to each leaf in $T$ and $T'$ respectively.

It is also significant to point out that the trees $T$ and $T'$ in Figure~\ref{fig:k>2 problem} are counterexamples to Proposition~3 from \cite{cox2023eternal}. That is, in an identical way to how Lemma~\ref{Lemma: k-leaf Reduction} does not generalize to the $k>2$ case, Proposition~3 from \cite{cox2023eternal} will not apply to $k>2$. 

When the diameter of the tree is small, the eternal distance-2 domination number can be directly computed as follows.

\begin{figure}
    \centering
    \includegraphics[scale = 0.5]{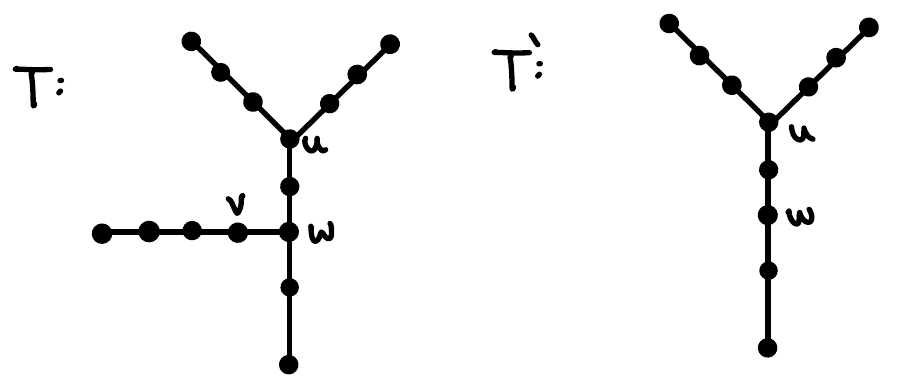}
    \caption{A tree $T$ where, $\gamma_{all,3}^\infty(T) \neq \gamma_{all,3}^\infty(T') + 1$. In this case both trees have eternal distance-$3$ domination number $3$.}
    \label{fig:k>2 problem}
\end{figure}

\begin{lemma}\label{Lemma: Small Diameter Characterization}
    Let $T = (V,E)$ be a tree and let $k>0$ be an integer and let $d$ be the diameter of $T$,
    \begin{itemize}
        \item if $k < d < 2k$, then $\gamma_{all,k}^\infty(T) = 2$, and
        \item if $d \leq k$, then $\gamma_{all,k}^\infty(T) = 1$.
    \end{itemize}
\end{lemma}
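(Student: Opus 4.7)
My plan is to handle the two cases separately. The case $d \le k$ is essentially immediate: every pair of vertices is within distance $d \le k$, so a single guard placed anywhere forms a distance-$k$ dominating set and can respond to any attack in one move, yielding $\gamma_{all,k}^\infty(T) = 1$.

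For the case $k < d < 2k$, I would first establish the lower bound by fixing a diametral path $v_0 v_1 \cdots v_d$. If only one guard were available, the attacker could attack $v_0$, forcing the guard there; but then $\dist(v_0, v_d) = d > k$ shows $\{v_0\}$ fails to be a distance-$k$ dominating set, so one guard does not suffice.

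For the upper bound I would invoke the classical tree-centre fact that in any tree the middle vertex of a diametral path has eccentricity $\lceil d/2 \rceil$; since $d \le 2k-1$, this eccentricity is at most $k$. Let $c$ denote such a centre. The strategy is to place both guards at $c$ initially and to maintain the invariant that after every round one guard sits at $c$. When a vertex $v$ is attacked, the guard at $c$ moves to $v$, which is legal because $\dist(c,v) \le k$, while the other guard (currently at the previously attacked vertex $v'$, or at $c$ itself) moves back to $c$, legal by the same bound. The resulting configuration $\{c,v\}$ is distance-$k$ dominating because $\{c\}$ alone already is, and the matching between consecutive configurations is built into the description of the two moves.

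The only real technicality is the eccentricity bound on $c$, which I would justify in one line: if some $w$ satisfied $\dist(c,w) > \lceil d/2 \rceil$, then concatenating the $c$-to-$w$ path with the longer half of the diametral path through $c$ would yield a walk in the tree of length greater than $d$, hence a path longer than the diameter, a contradiction. I do not anticipate any further obstacle.
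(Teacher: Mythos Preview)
Your argument is correct. The only slightly loose spot is the one-line justification of the eccentricity bound: concatenating the $c$--$w$ path with ``the longer half'' of the diametral path need not produce a genuine path, since the $c$--$w$ path may begin along that same half. The fix is standard---choose whichever half of the diametral path leaves $c$ in a different direction from the first edge of the $c$--$w$ path (this is always possible in a tree)---and in any case the fact that a tree's centre has eccentricity $\lceil d/2\rceil$ is classical, so this is a presentational quibble rather than a real gap.

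The paper takes a different and shorter route: it invokes the identity $\gamma_{all,k}^\infty(G)=\gamma_{all,1}^\infty(G^k)$ from \cite{cox2023eternal} and then observes that for $d\le k$ the power $T^k$ is complete (giving $\gamma_{all,1}^\infty=1$), while for $k<d<2k$ the power $T^k$ has a universal vertex but is not complete (giving $\gamma_{all,1}^\infty=2$). This packages both the upper and lower bounds into known facts about ordinary eternal domination. Your approach is more self-contained---it never leaves the distance-$k$ setting and exhibits the guards' strategy explicitly---whereas the paper's approach is terser but relies on the $G^k$ reduction and on the reader knowing $\gamma_{all,1}^\infty$ for graphs with a universal vertex. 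Underneath, both arguments hinge on the same fact that the centre of $T$ has eccentricity at most $k$ when $d<2k$; the paper just phrases this as ``$T^k$ has a universal vertex.''
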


\begin{proof}
Recall that \cite{cox2023eternal} showed that for all graphs $G$ and integers $k$, $\gamma_{all,k}^\infty (G) = \gamma_{all,1}^\infty (G^k)$. Suppose  $k < d < 2k$. Then $T^k$ has a universal vertex but is not a complete graph. This implies $\gamma_{all,1}^\infty (T^k) = 2$, which implies $\gamma_{all,k}^\infty (T) = 2$. Now suppose, $d \leq k$, then $T^k$ is complete, implying $\gamma_{all,1}^\infty (T^k) = \gamma_{all,k}^\infty (T) = 1.$
\end{proof}

We conclude this section with an algorithm, based on the previous results, that will allow us to compute the eternal distance-2 domination number in linear time.

\begin{algo} \label{alg: edtd}
    Let $T$ be a tree rooted at a vertex $r$.  We compute the eternal distance-2 domination number of $T$ as follows:
    \begin{enumerate}
        \item Set $\gamma = 0$ and let $S = \{r\}$ be a stack.
        \item While the stack is not empty, let $x$ be the top vertex of the stack.
        \begin{enumerate}
            \item If the subtree of $T$ consisting of $x$ and its descendants has depth at least 3, add each child of $x$ whose subtree has depth at least two to the stack.
            \item If the subtree of $T$ consisting of $x$ and its descendants has depth 2, remove $x$ from the stack, increment $\gamma$, and replace $T$ with $T - D[x]$ if $x$ has one child, and $T - D(x)$ otherwise, where $D(x)$ is the set of descendants of $x$.
            \item Otherwise, remove $x$ from the stack.
        \end{enumerate}
        \item If $T$ is not empty, increment $\gamma$.
        \item Return $\gamma$.
    \end{enumerate}
\end{algo}

\begin{theorem}\label{Thm: complexity}
    Let $T$ be a tree.  Then the output, $\gamma$, of Algorithm~\ref{alg: edtd}, for an arbitrary vertex $r$ of $T$, is equal to  $\gamma_{all, 2}^\infty(T)$.  Moreover, the running time of Algorithm~\ref{alg: edtd} for such an input is linear in the number of vertices.
\end{theorem}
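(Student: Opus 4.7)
The plan is to establish correctness and linear-time complexity separately. For correctness, I would proceed by strong induction on $|V(T)|$, arguing that the algorithm maintains the invariant $\gamma + \gamma_{all,2}^\infty(T_{\text{current}}) = \gamma_{all,2}^\infty(T_{\text{original}})$ throughout its execution. Each time step 2(b) fires at a vertex $x$, I would show the reduction decrements $\gamma_{all,2}^\infty$ by exactly one, so incrementing $\gamma$ preserves the invariant.

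The central claim is that whenever the algorithm triggers step 2(b) at a vertex $x$ with subtree depth exactly $2$, then $x$ is a $2$-leaf of the current tree (since all children of $x$ must be leaves or stems and at least one must be a stem), and the reduction $T \to T - D[x]$ when $x$ has one child, or $T \to T - D(x)$ otherwise, satisfies $\gamma_{all,2}^\infty(T) = \gamma_{all,2}^\infty(T_{\text{new}}) + 1$. The one-child case produces a broom subtree of diameter $2$ rooted at $x$, paralleling Case 1 of Lemma~\ref{Lemma: k-leaf Reduction}: one dedicated guard in this broom suffices, and the lower bound follows because the leaves under the stem child can only be distance-$2$-dominated from within $D[x]$. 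The multi-child case parallels Case 2 of Lemma~\ref{Lemma: k-leaf Reduction}: one guard shuttles between $x$ and the attacked descendant while the remaining guards play on $T - D(x)$, which retains $x$ as a leaf. When the stack finally empties, I would verify by tracking the stack-descent rule that the remaining tree contains no subtree of depth $\geq 2$, hence has diameter at most $2$, and therefore contributes $1$ to $\gamma$ if nonempty and $0$ otherwise by Lemma~\ref{Lemma: Small Diameter Characterization}.

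For the complexity bound, I would precompute the subtree depth of every vertex in a single $O(n)$ DFS pre-pass. Each vertex is pushed onto the stack at most once and popped at most once, so the stack operations contribute $O(n)$ total. The reductions delete disjoint vertex sets whose sizes sum to at most $n$, and the depth fields of affected ancestors can be updated in amortized constant time per deletion by walking up only as far as the recomputed depth actually changes. Summing these contributions yields the linear bound.

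The main obstacle will be verifying the reduction identity when the algorithm's rooted-tree removal set $D[x]$ or $D(x)$ differs from the unrooted sets $L[x]$ or $L(x)$ in Lemma~\ref{Lemma: k-leaf Reduction}, which happens whenever the parent of $x$ (or an ancestor) would itself belong to $L(x)$ in the unrooted view. One must adapt the guard-movement strategy from that lemma to show it still succeeds on the algorithm's smaller removal, and one must handle the edge cases where the current diameter drops below $4$ (so Lemma~\ref{Lemma: k-leaf Reduction} does not formally apply) by appealing directly to Lemma~\ref{Lemma: Small Diameter Characterization} and to the observation that the invariant only requires the reduction to hold step by step, not that the intermediate diameters are large.
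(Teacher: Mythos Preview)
Your plan is correct and mirrors the paper's proof: strong induction on $|V(T)|$, with Lemma~\ref{Lemma: k-leaf Reduction} justifying each step-2(b) reduction and Lemma~\ref{Lemma: Small Diameter Characterization} handling the small-diameter base and terminal cases, together with a constant-visits-per-vertex argument for the linear bound. The obstacle you anticipate dissolves in the paper's treatment: once the current tree has diameter greater than $4$, the parent of the depth-$2$ stack vertex $x$ cannot be a $0$- or $1$-leaf (otherwise the whole tree would have depth at most $3$ from the root, forcing diameter $\le 4$), so in fact $D(x)=L(x)$ and $D[x]=L[x]$ on the nose and no adaptation of the guard strategy is needed; the diameter-$\le 4$ case is then dispatched directly by Lemma~\ref{Lemma: Small Diameter Characterization}, exactly as you propose.
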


\begin{proof}
    We proceed by induction on the number of vertices in the following way, for any rooted tree $T$ of order less than $n$ with root $r$, Algorithm~\ref{alg: edtd} will output $\gamma_{all,k}^\infty(T)$. 
    
    As our base case consider a tree $T =(V,E)$ rooted at a fixed but arbitrary vertex $r$ which has depth at most 2. Then $T$ has diameter at most 4.  Note in particular that this is the case for all graphs on at most 3 vertices.  Suppose $T$ has diameter at most 2, then by Lemma~\ref{Lemma: Small Diameter Characterization}, $\gamma_{all, 2}^\infty(T) = 1$.  If $r$ is a universal vertex of $T$, then $T$ has depth 1, so the algorithm returns 1. Otherwise, $r$ is a leaf and $T$ has depth $2$, so $L[x] = V$, hence, $T$ is replaced by $\emptyset$, and the algorithm returns 1.  If $T$ has diameter 3 or 4, then by Lemma~\ref{Lemma: Small Diameter Characterization}, $\gamma_{all, 2}^\infty(T) = 2$.  We have that $T$ has depth 2 and $r$ has multiple children. Thus $T$ is replaced by the single vertex $r$, so the algorithm returns 2.
    
    Now let $T$ be a tree rooted at $r$ such that $T$ has depth at least 3.  Let $v$ be the first vertex to appear on top of the stack for which the subtree of $T$ consisting of $v$ and its descendants has depth 2; clearly $v \neq r$, furthermore, it should be clear that a vertex $v$ will appear at some point during the run-time of Algorithm~\ref{alg: edtd} (as there must exist a descendant of $r$ with the properties of $v$ given $T$ has depth at least $3$, while vertices on top of the stack of type (a) add vertices to the stack whose subtree has smaller depth, vertices of type (b) are exactly vertices $v$, while vertices of type (c) are removed from the stack not to be considered again). Then $v$ is a 2-leaf of $T$, and the diameter of $T$ is at least 3. If $T$ has diameter at most 4, then by Lemma~\ref{Lemma: Small Diameter Characterization}, $\gamma_{all, 2}^\infty(T) = 2$.  Then $T$ is replaced with a nonempty tree of diameter at most 2.  So the algorithm returns 2 as desired by induction, noting that the steps of the algorithm applied to this tree are a subset of the steps applied to $T$.
    
    If $T$ has diameter greater than 4, then the ancestor of $v$ is not in $L[v]$.  If $\deg(v) = 2$, then by Lemma~\ref{Lemma: k-leaf Reduction}, $\gamma_{all, 2}^\infty(T) = \gamma_{all, 2}^\infty(T - L[v]) + 1$, which by the induction hypothesis is precisely the value returned by the algorithm.  If $\deg(v) \ge 3$,     then by Lemma~\ref{Lemma: k-leaf Reduction}, $\gamma_{all, 2}^\infty(T) = \gamma_{all, 2}^\infty(T - L(v)) + 1$, which by the induction hypothesis is precisely the value returned by the algorithm.

    Finally, we observe that each vertex of the tree is only kept on the stack if its corresponding subtree has depth at least 3. When we return to this vertex, its descendants have depth at most 1, so it has depth at most 2, and is therefore removed from the stack. Furthermore, a vertex is considered while not on top of the stack only when first, second, or third ancestor is on top of the stack during the while loop. Thus, each vertex is considered at most a constant number of times and each consideration is linear time. Therefore, the algorithm runs in linear time. This completes the proof.
\end{proof}

\section{Trees with $\gamma_{all,2}^\infty = \gamma$}

In this section we resolve the question posed in \cite{cox2023eternal} of characterizing when a tree has eternal distance-$2$ domination number equal to its domination number. The primary tool in our analysis is the reduction given in Lemma~\ref{Lemma: k-leaf Reduction}. We begin this section by pointing out that there are trees $T$ where every vertex in a minimum dominating set has at least $2$ private neighbours, but the eternal distance-$2$ domination number is arbitrarily far from the domination number. This resolves another question in \cite{cox2023eternal} in the negative.

For an example of such a tree take a star $K_{1,n}$ and add two leaves to each leaf of $K_{1,n}$ to form $T_n$. Then, $\gamma(T_n) = n$ while every vertex in the unique minimum dominating set in this tree has exactly two private neighbours. However as the diameter of $T_n$ is $4$, Lemma~\ref{Lemma: Small Diameter Characterization} implies $\gamma_{all,2}^\infty(T_n) = 2$. 

We first note the following lemma which is a special case of Proposition 1 from ~\cite{cox2023eternal}.

\begin{lemma}[\cite{cox2023eternal}]\label{Lemma: Domination >= Eternal 2-Domination}
Let $G = (V,E)$ be a graph. Then, $\gamma_{all,2}^\infty(G) \leq \gamma(G)$.
\end{lemma}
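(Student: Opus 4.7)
The plan is to exhibit an explicit eternal strategy using exactly $\gamma(G)$ guards, thereby showing $\gamma_{all,2}^\infty(G) \le \gamma(G)$. Fix a minimum dominating set $D = \{d_1, d_2, \ldots, d_r\}$ of $G$, where $r = \gamma(G)$, and place one guard $g_i$ at $d_i$ for each $i$. I would maintain throughout the game the following invariant: at every point in play, $\mathrm{dist}(g_i, d_i) \le 1$ for all $i$. In other words, each guard stays ``tethered'' to its home vertex in $D$ at distance at most $1$.

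First, I would check that the invariant implies the current configuration is always a distance-$2$ dominating set, which is needed so that responses to arbitrary future attacks remain available. Indeed, for any $u \in V(G)$, since $D$ is a dominating set there is some $d_i$ with $\mathrm{dist}(d_i, u) \le 1$; combining with the invariant and the triangle inequality gives $\mathrm{dist}(g_i, u) \le 2$.

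Next, I would describe the response to an attack at a vertex $v$. Because $D$ dominates $G$, there is an index $i$ with $\mathrm{dist}(d_i, v) \le 1$; move $g_i$ to $v$ and leave all other guards fixed. To see the move is legal in the distance-$2$ model, note that all other guards move distance $0$, and by the invariant $g_i$ is currently at some vertex $u$ with $\mathrm{dist}(u, d_i) \le 1$, so $\mathrm{dist}(u, v) \le \mathrm{dist}(u, d_i) + \mathrm{dist}(d_i, v) \le 2$. Finally, the invariant is preserved: after the move $g_i$ sits at $v$ with $\mathrm{dist}(v, d_i) \le 1$, and the other guards did not move. Since $v$ itself is now occupied, the attack is defended.

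I do not expect any real obstacle here: the proof reduces to a triangle-inequality check together with the observation that each $d_i$ serves as a fixed reference point that a single guard can patrol a $1$-neighbourhood of using only distance-$2$ steps. The only point of care is the tethering invariant, which ensures that the strategy is truly eternal rather than only one-step valid.
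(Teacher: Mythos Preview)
Your argument is correct. The paper does not give its own proof of this lemma; it simply records it as the $k=2$ case of Proposition~1 from \cite{cox2023eternal}, namely $\gamma_{all,k}^\infty(G)\le \gamma_{\lfloor k/2\rfloor}(G)$. Your tethering strategy (each guard patrols the closed $1$-neighbourhood of a fixed dominating vertex, using distance-$2$ moves) is exactly the standard proof of that general inequality specialised to $k=2$, so the approaches coincide.
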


In order to characterize the trees whose domination number is equal to their eternal distance-$2$ domination number we must first define the following family of trees and explore several properties of this family. We define the family of trees $\mathcal{T}$ recursively as follows:
\begin{enumerate}
    \item Every tree with diameter at most 3 is in $\mathcal{T}$, and
    \item If $T \in \mathcal{T}$, then the tree $T'$ formed by appending a star $K_{1, m}$ with $m \ge 2$ with an edge from a leaf of $K_{1, m}$ to any vertex of $T$ is also in $\mathcal{T}$, and
    \item If $T \in \mathcal{T}$ and $v \in V(T)$ is a leaf such that there is a minimum dominating set containing $v$, then for all trees $T''$ formed by appending a star $K_{1,m}$ where $m \geq 1$ to $v$ by adding an edge from $v$ to the high degree vertex of the star, as well as appending $t\geq 1$  leaves to $v$, is also in $\mathcal{T}$.
    \item If $T \in \mathcal{T}$ and $v \in V(T)$ is a leaf such that $\gamma(T-v)=\gamma(T) - 1$, then $T'''$ formed by appending two stars $K_{1,m}$ and $K_{1,M}$ where $m,M\geq 1$ to $v$ by adding an edge from $v$ to the high degree vertex of the stars, is also in $\mathcal{T}$.
\end{enumerate}

\begin{theorem}\label{Thm: gamma = gamma_(all,2)}
    If $T$ is a tree, then $\gamma_{all, 2}^\infty(T) = \gamma(T)$ if and only if $T \in \mathcal{T}$.
\end{theorem}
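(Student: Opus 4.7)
For the $(\Leftarrow)$ direction, the plan is to proceed by structural induction on the construction of $T \in \mathcal{T}$, using Lemma~\ref{Lemma: k-leaf Reduction} together with direct analysis of the ordinary domination number. The base case (diameter at most $3$) covers stars and double stars, for which $\gamma$ and $\gamma_{all,2}^\infty$ are both either $1$ or $2$ by Lemma~\ref{Lemma: Small Diameter Characterization} together with the standard values of $\gamma$. For each of operations $2$, $3$, and $4$ in the definition of $\mathcal{T}$, the attached substructure contains a $2$-leaf ($\ell$, the attaching leaf, in operation $2$; $v$ itself in operations $3$ and $4$); applying Lemma~\ref{Lemma: k-leaf Reduction} to this $2$-leaf (and, where necessary in small cases such as $m=2$ in operation $2$, unfolding the reduction to isolate the genuine increment) yields $\gamma_{all,2}^\infty(T) = \gamma_{all,2}^\infty(T_0) + 1$, where $T_0$ is the predecessor tree. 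A complementary direct computation gives $\gamma(T) = \gamma(T_0) + 1$: in operation $2$ this follows because the attached star forces one new dominator placed at its centre; in operations $3$ and $4$ the side conditions (that $v$ lies in a minimum dominating set of $T_0$, respectively that $\gamma(T_0 - v) = \gamma(T_0) - 1$) are exactly what prevents the new dominators required for the added star(s) and pendants from being amortized against existing dominators of $T_0$.

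For the $(\Rightarrow)$ direction, I would induct on $|V(T)|$, assuming $\gamma_{all,2}^\infty(T) = \gamma(T)$. If the diameter of $T$ is at most $3$, then $T \in \mathcal{T}$ by definition. Otherwise, Lemma~\ref{Lemma: Big Diameter Implies k-leaf} furnishes a $2$-leaf $v$ and Lemma~\ref{Lemma: k-leaf Reduction} produces a smaller tree $T^*$ with $\gamma_{all,2}^\infty(T^*) = \gamma_{all,2}^\infty(T) - 1$. The crucial substep is establishing $\gamma(T^*) = \gamma(T) - 1$; combined with Lemma~\ref{Lemma: Domination >= Eternal 2-Domination}, this yields $\gamma_{all,2}^\infty(T^*) = \gamma(T^*)$, placing $T^* \in \mathcal{T}$ by induction. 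I would then identify which of operations $2$, $3$, or $4$ rebuilds $T$ from $T^*$ by case analysis on the local structure at $v$: how many $1$-leaf neighbours $v$ has, whether $v$ also has any $0$-leaf neighbours in $T$, and the resulting status of $v$ in $T^*$ as either deleted, a leaf, or an interior vertex. In each case I would verify the side condition required by the chosen operation, showing that the relevant leaf in $T^*$ either lies in a minimum dominating set of $T^*$ or satisfies the ``deletion decreases $\gamma$'' property.

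The principal obstacle is propagating the equality $\gamma_{all,2}^\infty = \gamma$ from $T$ down to $T^*$: the inequality $\gamma(T^*) \leq \gamma(T) - 1$ follows by extending an optimal dominating set of $T^*$ with a single vertex to cover the removed pendant structure, but ruling out $\gamma(T^*) < \gamma(T) - 1$ requires a careful argument that the assumed equality $\gamma_{all,2}^\infty(T) = \gamma(T)$ enforces enough local rigidity at $v$. A secondary delicate point is ensuring that the side conditions of operations $3$ and $4$ emerge \emph{from} the case analysis rather than being imposed: the hypothesis $\gamma_{all,2}^\infty(T) = \gamma(T)$ together with the structure of $v$'s neighbourhood should force the relevant leaf in $T^*$ to have the required minimum-dominating-set membership or deletion property, and distinguishing precisely when operation $3$ (one attached star plus extra pendants) applies versus operation $4$ (two attached stars) versus operation $2$ (no pendants at $v$) will require tracking the exact counts of $1$-leaf and $0$-leaf neighbours of $v$ in $T$.
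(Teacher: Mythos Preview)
Your overall plan matches the paper's proof closely: both directions proceed by (structural) induction, applying Lemma~\ref{Lemma: k-leaf Reduction} at a $2$-leaf $v$ to pass to a smaller tree and tracking $\gamma$ alongside $\gamma_{all,2}^\infty$, with a case analysis on the diameter of $T[L(v)]$. The diameter-$2$, diameter-$3$, and diameter-$4$ cases correspond exactly to operations~(2), (3), and~(4), and the side conditions in (3) and (4) do indeed emerge from the analysis rather than being imposed, just as you expect.

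One point in your description of the obstacle is inverted and worth fixing before you write details. Extending a minimum dominating set of $T^*$ by a single vertex proves $\gamma(T)\le \gamma(T^*)+1$, i.e.\ $\gamma(T^*)\ge \gamma(T)-1$, not the $\le$ you state; moreover this ``add one vertex'' move fails outright in the operation-(4) configuration, where the two attached stars require two separate dominators. In fact $\gamma(T^*)\ge \gamma(T)-1$ is the inequality that comes for free, via the chain
\[
\gamma(T)-1=\gamma_{all,2}^\infty(T)-1=\gamma_{all,2}^\infty(T^*)\le \gamma(T^*),
\]
while $\gamma(T^*)\le \gamma(T)-1$ is the inequality requiring work. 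The paper obtains it by starting from a minimum dominating set $D$ of $T$, observing that $D$ must contain vertices in $L[v]$ to cover the pendant leaves, and then arguing case by case that removing those vertices (in the diameter-$4$ case, removing the two star centres and adding back $v$) leaves a dominating set of $T^*$ of size $\gamma(T)-1$. Once you straighten out which direction is which, the very same set $D$ furnishes the minimum dominating set of $T^*$ containing $v$ (for operation~(3)) or the witness that $\gamma(T^*-v)=\gamma(T^*)-1$ (for operation~(4)).
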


\begin{proof}
    Suppose $T \in \mathcal{T}$.  If $T$ has diameter at most 2, then $T$ has a universal vertex, so $\gamma_{all, 2}^\infty(T) = \gamma(T) = 1$.  If $T$ has diameter 3, then $\gamma(T) = 2$ since $T$ does not have a universal vertex, but the two non-leaves form a dominating set, and $\gamma_{all, 2}^\infty(T) = 2$ by Lemma~\ref{Lemma: Small Diameter Characterization}.

    Suppose all trees $S \in \mathcal{T}$ on fewer than $n$ vertices satisfy $\gamma_{all, 2}^\infty(S) = \gamma(S)$, and let $T \in \mathcal{T}$ be a tree with $n$ vertices and diameter at least 4.  Suppose $T$ is formed by appending $K_{1, m}$, $m\geq 2$, to some $T' \in \mathcal{T}$.  Then we have $\gamma_{all, 2}^\infty(T) = \gamma_{all, 2}^\infty(T') + 1$ by Lemma~\ref{Lemma: k-leaf Reduction}.  Moreover, $\gamma(T) \le \gamma(T') + \gamma(K_{1, m}) = \gamma(T') + 1$, and any minimum dominating set of $T$ contains either the stem of $K_{1, m}$ or its unique leaf neighbour, which dominates no vertex of $T'$, so $\gamma(T') \le \gamma(T) - 1$.  Therefore, $\gamma(T) = \gamma(T') + 1$.  Since $\gamma_{all, 2}^\infty(T') = \gamma(T')$ by the induction hypothesis, it follows that $\gamma_{all, 2}^\infty(T) = \gamma(T)$.

    Now suppose $T$ is formed from some $T'' \in \mathcal{T}$ by taking a leaf $v$ for which there is a minimum dominating set of $T''$ containing $v$, and appending to $v$ a star $K_{1,m}$ where $m \geq 1$ by adding an edge to the high degree vertex of the star, as well as appending $t\geq 1$  leaves. Then $v \in V(T)$ is a $2$-leaf where $T[L[v]]$ has diameter $3$ and $T'' = T - L(v)$. Then Lemma~\ref{Lemma: k-leaf Reduction} implies $\gamma_{all,2}^\infty(T) = \gamma_{all,2}^\infty(T'')+1$. Furthermore, let $D\subset V(T'')$ be a minimum dominating set of $T''$ where $v \in D$. Then, $D \cup \{u\}$ where $u$ is the high degree vertex of the star appended to $v$ in $T$ is a dominating set of $T$. This implies $\gamma(T)\leq \gamma(T'')+1$. As $T'' \in \mathcal{T}$ implies $\gamma_{all,2}^\infty(T'') = \gamma(T'')$ and Lemma~\ref{Lemma: Domination >= Eternal 2-Domination} implies $  \gamma(T'') +1 = \gamma_{all,2}^\infty(T'') +1 = \gamma_{all,2}^\infty(T)  \leq \gamma(T)$ we conclude that $\gamma(T)= \gamma(T'')+1$. Hence, $\gamma_{all,2}^\infty(T) = \gamma(T)$ as required.

    Finally, suppose $T$ is formed by appending two stars $K_{1,m}$ and $K_{1,M}$ where $m,M \geq 1$ to $v$ by adding an edge from $v$ to the the high degree vertices of each star, where $v \in V(T''')$ for some $T''' \in \mathcal{T}$ and leaf $v$ satisfying $\gamma(T'''-v) = \gamma(T''')-1$. Then $v$ is a $2$-leaf in $T$ and Lemma~\ref{Lemma: k-leaf Reduction} implies $\gamma_{all,2}^\infty(T) = \gamma_{all,2}^\infty(T''')+1$. As $\gamma(T'''-v) = \gamma(T''')-1$, all minimum dominating sets of $T'''-v$ do not contain $v$ or a neighbour of $v$. Then, letting $D$ be a minimum dominating set of $T'''-v$ and letting $u_1$ and $u_2$ be the high degree vertices of the appended stars we see that $D\cup \{u_1,u_2\}$ is a dominating set in $T$, hence, $\gamma(T) \leq \gamma(T'''-v)+2=\gamma(T''')+1$.
    It is easy to see that $\gamma(T) \geq \gamma(T'''-v)+2=\gamma(T''')+1$ as $T$ contains $2$ leaves with no common neighbours adjacent to no vertices in $T'''-v$. Thus, $\gamma(T) = \gamma(T''')+1$ implying $\gamma(T) = \gamma_{all,2}^\infty(T)$ as required.

    Conversely, suppose there exists a tree $X$ such that $X \notin \mathcal{T}$, but $\gamma_{all, 2}^\infty(X) = \gamma(X)$.  Let $Y$ be a minimal such tree with respect to induced subgraph. Then the diameter of $Y$ is at least $4$.  It follows from  Lemma~\ref{Lemma: Big Diameter Implies k-leaf} that there exists a $2$-leaf in $Y$.  
    
    Suppose $v \in V(Y)$ is a $2$-leaf such that $Y[L[v]]$ has diameter $2$, and let $Y' = Y - L[v]$. By Lemma~\ref{Lemma: k-leaf Reduction}, we have $\gamma_{all, 2}^\infty(Y) = \gamma_{all, 2}^\infty(Y') + 1$, and by Lemma~\ref{Lemma: Domination >= Eternal 2-Domination}, we have $\gamma_{all, 2}^\infty(Y') \le \gamma(Y')$.  Consider a leaf $\ell \neq v$ in $Y[L[v]]$.  Then any dominating set of $Y$ contains a vertex in $L[v]$ in order to guard $\ell$, and that guards no vertex in $Y'$.  Thus $\gamma(Y') < \gamma(Y)$. But this implies $\gamma(Y)-1 = \gamma_{all,2}^\infty(Y)-1 = \gamma_{all,2}^\infty(Y') \leq \gamma(Y') \leq \gamma(Y)-1$. It follows that $\gamma_{all, 2}^\infty(Y') = \gamma(Y')$.  By the minimality of $Y$, we have $Y' \in \mathcal{T}$.  But then by definition of $\mathcal{T}$, we have $Y \in \mathcal{T}$ by condition (2), a contradiction.  Hence we may assume that for every $2$-leaf $v$ in $T$, the diameter of $Y[L[v]]$ is at least $3$.

    Suppose $v \in V(Y)$ is a 2-leaf such that $Y[L[v]]$ has diameter at least 3, and let $Y'' = Y - L(v)$. Observe $v$ is a leaf in $Y''$. By Lemma~\ref{Lemma: k-leaf Reduction}, we have $\gamma_{all, 2}^\infty(Y) = \gamma_{all, 2}^\infty(Y'') + 1$, and by Lemma~\ref{Lemma: Domination >= Eternal 2-Domination}, we have $\gamma_{all, 2}^\infty(Y'') \le \gamma(Y'')$.  Let $w$ and $x$ be the ends of a longest path in $Y[L[v]]$. Then $x$ and $w$ are leaves and $3\leq \dist(w, x) \leq 4$. Then there is a minimum dominating set of $Y$ that contains the unique neighbour of $w$ and the unique neighbour of $x$ both of which are members of $N[v]$.

    Suppose $\dist(w,x)=3$ and assume without loss of generality that $\dist(w, v) = 2$. Note that as $\dist(w,x)=3$ and $\dist(w, v) = 2$, $v$ must be the neighbour of $x$. Let $D$ be such a minimum dominating set of $Y$ containing $v$ (the unique neighbour of $x$). It follows that the neighbour of $w$ is not necessary to dominate any vertex in $Y''$, so $\gamma(Y'') < \gamma(Y)$. As before this implies that $\gamma(Y)-1 = \gamma_{all,2}^\infty(Y)-1 = \gamma_{all,2}^\infty(Y'') \leq \gamma(Y'') \leq \gamma(Y)-1$, so $\gamma(Y'') = \gamma(Y)-1$. Then $D \setminus N(w)$ (which contains $v$) is a minimum dominating set of $Y''$ and $\gamma_{all,2}^\infty(Y'') = \gamma(Y'')$ and by the minimiality of $Y$, $Y'' \in \mathcal{T}$. But this implies $Y \in \mathcal{T}$ by condition (3), a contradiction.

    Suppose then that $\dist(w,x)=4$. Then $\dist(w,v)=\dist(x,v)= 2$. Let $D$ be a minimum dominating set of $Y$ containing the neighbour of $w$ and $x$, call them $u_x,u_w$. As both $u_x,u_w$ are neighbours of $v$, $D\setminus\{u_x,u_w\}$ is a dominating set for $Y''-v$, thus, $\gamma(Y''-v)\leq \gamma(Y) - 2$. Additionally $(D \cup \{v\}) \setminus \{u_x,u_w\}$ is clearly a dominating in $Y''$. Hence, $\gamma(Y'') \leq \gamma(Y)-1$. Again, $\gamma(Y)-1 = \gamma_{all,2}^\infty(Y)-1 = \gamma_{all,2}^\infty(Y'') \leq \gamma(Y'') \leq \gamma(Y)-1$, so we conclude $\gamma(Y'') = \gamma(Y)-1$ and $\gamma_{all,2}^\infty(Y'') = \gamma(Y'')$ implying $Y''\in \mathcal{T}$. Observe that $\gamma(Y'') \leq \gamma(Y''-v)+1$ as any dominating set of $Y''-v$ union $v$ dominated $Y''$. Then $\gamma(Y''-v) = \gamma(Y'')-1$. But this is a contradiction as condition (4) implies $Y \in \mathcal{T}$. This concludes the proof.
\end{proof}

\section{Eternal Distance-$2$ Domination Critical Trees}

We say a graph $G$ is eternal distance-$2$ domination critical if deleting any non-cut vertex, $v$, ensures that $G-v$ has eternal distance-$2$ domination number strictly less than $G$. Of course if $T$ is a tree this is equivalent to stating that deleting any leaf from $T$ reduces the eternal distance-$2$ domination number of $T$. In this section we characterize which trees are eternally distance-$2$ domination critical.  We begin with the following observation.

\begin{lemma}\label{Lemma: Stem-Ciritical}
    If $T = (V,E)$ is a tree and $v \in V$ is a vertex adjacent to two leaves $u,w$, then $\gamma_{all,2}^\infty(T) = \gamma_{all,2}^\infty(T-u)$. 
\end{lemma}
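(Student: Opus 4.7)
The plan is to prove the equality by establishing $\gamma_{all,2}^\infty(T-u) \le \gamma_{all,2}^\infty(T)$ and $\gamma_{all,2}^\infty(T) \le \gamma_{all,2}^\infty(T-u)$ separately, both of which rest on the observation that $u$ and $w$ are twin leaves of $v$: each is adjacent only to $v$, so $\dist(u,z) = \dist(w,z)$ for every $z \in V(T) \setminus \{u,w\}$, $\dist(u,w) = 2$, and distances among vertices of $T-u$ are unchanged by the removal of $u$.

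For the first inequality, I would take a minimum eternal distance-$2$ dominating family $\mathcal{F}$ for $T$ and transport it to $T-u$ via the vertex map $\phi$ sending $u$ to $v$ and fixing every other vertex. The two properties to check are: if $D$ distance-$2$ dominates $T$, then $\phi(D)$ distance-$2$ dominates $T-u$, because any $y \in V(T-u)$ dominated in $T$ by $u$ satisfies $\dist(u,y) \le 2$, which forces $\dist(v,y) \le 1$; and if $\dist(x,x') \le 2$ in $T$, then $\dist(\phi(x),\phi(x')) \le 2$ in $T-u$, which is immediate in every subcase since the unique $u$--$x'$ path passes through $v$. Both checks transfer the matching condition, so $\phi(\mathcal{F})$ is an eternal distance-$2$ dominating family for $T-u$ of the same size. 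Allowing multiple guards to share a vertex (per the paper's convention) handles any collisions that $\phi$ introduces.

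For the reverse inequality, I would use a minimum eternal distance-$2$ dominating family $\mathcal{F}'$ for $T-u$ to play the game on $T$, enlarging the family to include, for every $D \in \mathcal{F}'$ with $w \in D$, the swapped set $(D \setminus \{w\}) \cup \{u\}$. I would maintain the invariant that the current configuration in $T$ is either (a) the configuration prescribed by the $T-u$ strategy, or (b) that configuration with one guard at $w$ replaced by a guard at $u$. Attacks on vertices of $V(T-u)$ are handled by calling the $T-u$ strategy, with the $w \leftrightarrow u$ swap absorbed into the matching at no distance cost thanks to $\dist(u,z) = \dist(w,z)$ for $z \neq u,w$. An attack on $u$ is treated as a virtual attack on $w$ followed by the swap, moving us from state (a) to state (b) or leaving us in state (b) where $u$ is already occupied; from state (b), an attack on $w$ is resolved by sliding the displaced guard from $u$ back to $w$ along the length-$2$ path through $v$, returning to state (a).

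The main obstacle is the bookkeeping for the reverse direction: for each combination of current state (a) or (b) and attack target (in $V(T-u) \setminus \{w\}$, at $w$, or at $u$), I need to verify that the chosen next configuration lies in the enlarged family, contains the attacked vertex, and is reached from the current configuration by a distance-$2$ matching. The twin-leaf identities $\dist(u,z) = \dist(w,z)$ for $z \notin \{u,w\}$ and $\dist(u,w) = 2$ reduce each of these verifications to a one-line calculation, so the conceptual content is concentrated in fixing the invariant correctly; once that is set up, the case analysis closes routinely.
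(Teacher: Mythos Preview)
Your proposal is correct and follows essentially the same approach as the paper: both directions hinge on the twin-leaf identity $\dist(u,z)=\dist(w,z)$ for all $z\notin\{u,w\}$, and the reverse inequality is handled by simulating an attack on $u$ as an attack on $w$ in $T-u$, redirecting the responding guard to $u$. The paper dispatches the first inequality as ``clear'' and presents the second without your explicit state-(a)/(b) invariant, but the content is the same; your added bookkeeping is sound and just makes the simulation argument more explicit.
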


\begin{proof}
    Suppose $T = (V,E)$ is a tree and $v \in V$ is a vertex adjacent to two leaves $u,w$. It is clear that $\gamma_{all,2}^\infty(T-u) \leq \gamma_{all,2}^\infty(T)$ so it is sufficient to show $\gamma_{all,2}^\infty(T-u) \geq \gamma_{all,2}^\infty(T)$. By definition, there exists an initial configuration $D$ in $T-u$ such that for any infinite sequence of attacks $\mathcal{A}'$ there exists a strategy for deploying the guards $\mathcal{D}'$ which is eternally distance-$2$ dominating. Given $\mathcal{D}'$ we define $\mathcal{D}$ to be the same strategy in $T$, except if $u$ is attacked the guards respond as if $w$ were attacked in $T-u$, with the exception of the guard who would move to $w$ who instead moves to $u$. As $\dist(x,u)=\dist(x,w)$ for all $x \in V\setminus \{u,w\}$, then $\mathcal{D}$ is eternally distance-$2$ dominating. Hence, $\gamma_{all,2}^\infty(T-u) \geq \gamma_{all,2}^\infty(T)$ as required.
\end{proof}

Note that Lemma~\ref{Lemma: Stem-Ciritical} implies that if  $T$ is eternal distance-$2$ domination critical, then every stem of $T$ has exactly one leaf. Using this observation we are able to show the following result regarding the structure of $2$-leaves in an eternal distance-$2$ domination critical tree.

\begin{lemma}\label{Lemma: Critical L[v]}
    If $T = (V,E)$ is eternal distance-$2$ domination critical, then for all $2$-leaves $v\in V$, $L[v]$ is isomorphic to $P_3$ or $P_4$.
\end{lemma}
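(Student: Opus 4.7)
The plan is a proof by contradiction. Suppose $v$ is a $2$-leaf of a critical tree $T$ for which $L[v]$ is neither $P_3$ nor $P_4$; we will exhibit a leaf whose removal does not decrease $\gamma_{all,2}^\infty(T)$, contradicting criticality. Lemma~\ref{Lemma: Stem-Ciritical} forces every stem of $T$ to carry exactly one leaf. Writing $k$ for the number of leaves adjacent to $v$ and $m$ for the number of $1$-leaf neighbours of $v$, this gives $k \le 1$ and $m \ge 1$, with each $1$-leaf neighbour of $v$ carrying a single pendant leaf. A direct check shows $(k,m)=(0,1)$ yields $L[v] \cong P_3$ and $(k,m)=(1,1)$ yields $L[v] \cong P_4$, so the task reduces to deriving a contradiction from $m \ge 2$.

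Assume $m \ge 2$ and let $u_1, u_2$ be distinct $1$-leaf neighbours of $v$ with respective pendant leaves $\ell_1, \ell_2$. The target is to show $\gamma_{all,2}^\infty(T - \ell_1) = \gamma_{all,2}^\infty(T)$. The path $\ell_1 u_1 v u_2 \ell_2$ shows $T$ has diameter at least $4$, while $T[L(v)]$ contains the two disjoint edges $u_1\ell_1$ and $u_2\ell_2$ and so is disconnected; in particular it does not have diameter $2$. Lemma~\ref{Lemma: k-leaf Reduction} then yields $\gamma_{all,2}^\infty(T) = \gamma_{all,2}^\infty(T - L(v)) + 1$.

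Turning to $T - \ell_1$, note three things: $v$ is still a $2$-leaf (since $u_2$ is still a $1$-leaf neighbour), $L_{T-\ell_1}(v) = L_T(v) \setminus \{\ell_1\}$, and $(T-\ell_1) - L_{T-\ell_1}(v) = T - L_T(v)$. If the diameter of $T - \ell_1$ is at least $4$, we may apply Lemma~\ref{Lemma: k-leaf Reduction} to $T - \ell_1$: the induced subgraph on $L_{T-\ell_1}(v)$ still contains the isolated vertex $u_1$ together with the edge $u_2\ell_2$, so it remains disconnected, and the ``otherwise'' branch of the lemma produces $\gamma_{all,2}^\infty(T-\ell_1) = \gamma_{all,2}^\infty(T - L_T(v)) + 1 = \gamma_{all,2}^\infty(T)$, contradicting criticality.

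The only remaining case is that $T - \ell_1$ has diameter exactly $3$ (the path $u_1 v u_2 \ell_2$ shows the diameter is at least $3$). Ruling out any length-$4$ path in $T - \ell_1$ forces $m = 2$ and obliges $v$ to have no non-leaf neighbour in $T$. Hence $V(T) = \{v\} \cup L_T(v)$, so $T - L_T(v)$ is the isolated vertex $v$, and Lemma~\ref{Lemma: k-leaf Reduction} gives $\gamma_{all,2}^\infty(T) = 2$, while Lemma~\ref{Lemma: Small Diameter Characterization} gives $\gamma_{all,2}^\infty(T - \ell_1) = 2$ directly, again contradicting criticality. The main obstacle is precisely this small-diameter edge case, where Lemma~\ref{Lemma: k-leaf Reduction} can no longer be invoked on $T - \ell_1$ and one must instead evaluate $\gamma_{all,2}^\infty$ on both sides by hand using Lemma~\ref{Lemma: Small Diameter Characterization}.
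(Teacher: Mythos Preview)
Your proof is correct and follows essentially the same route as the paper's: assume $v$ has at least two $1$-leaf neighbours, apply Lemma~\ref{Lemma: k-leaf Reduction} to both $T$ and $T-\ell_1$ to see they reduce to the same tree $T-L(v)$, and conclude $\gamma_{all,2}^\infty(T)=\gamma_{all,2}^\infty(T-\ell_1)$, contradicting criticality; Lemma~\ref{Lemma: Stem-Ciritical} then pins down $L[v]$ as $P_3$ or $P_4$.

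One small wording slip: in the edge case you write that $v$ ``has no non-leaf neighbour in $T$'', but $u_1,u_2$ are non-leaf neighbours of $v$. What you mean (and what you correctly use) is that $v$ has no neighbour outside $L_T(v)$, i.e.\ the single exceptional neighbour allowed in the definition of a $2$-leaf is absent, so $V(T)=\{v\}\cup L_T(v)$. Your handling of this small-diameter case is in fact more careful than the paper's own proof, which applies Lemma~\ref{Lemma: k-leaf Reduction} to $T-y_2$ without verifying that $T-y_2$ still has diameter at least~$4$.
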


\begin{proof}
Suppose $T = (V,E)$ is an eternal distance-$2$ domination critical tree and let $v \in V$ be a $2$-leaf in $T$. Suppose $v$ is adjacent to multiple 1-leaves $x_1, x_2$ with respective leaves $y_1, y_2$.  By Lemma~\ref{Lemma: k-leaf Reduction}, $\gamma_{all, 2}^\infty(T) = \gamma_{all, 2}^\infty(T - L(v)) + 1$.  Now, for $T - y_2$, we have again by Lemma~\ref{Lemma: k-leaf Reduction} that $\gamma_{all, 2}^\infty(T - y_2) = \gamma_{all, 2}^\infty(T - L(v)) + 1$, since $(T - y_2)[L(v) \setminus\{y_2\}]$ has diameter at least 3.  Thus $\gamma_{all, 2}^\infty(T) = \gamma_{all, 2}^\infty(T - y_2)$, which contradicts $T$ being eternal distance-2 domination critical.  It follows, together with Lemma~\ref{Lemma: Stem-Ciritical}, that $T[L[v]]$ is isomorphic to $P_3$ or $P_4$.
\end{proof}

In the following pair of results, we demonstrate that when these structured 2-leaves exist in an eternal distance-2 domination critical graph, it must have been built from an eternal distance-2 domination critical graph.

\begin{lemma}\label{Lemma: Critical Extension P_3}
    Let $T$ be a tree, and let $T'$ be formed by appending a path on three vertices to a leaf of $T$.  Then $T'$ is eternal distance-2 domination critical if and only if $T$ is eternal distnace-2 domination critical.
\end{lemma}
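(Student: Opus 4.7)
The plan is to identify the $2$-leaf structure created by appending the path $(x_1, x_2, x_3)$ (with edges $\ell x_1, x_1 x_2, x_2 x_3$, where $\ell$ is the designated leaf of $T$), apply Lemma~\ref{Lemma: k-leaf Reduction} in $T'$ and in each of its leaf-deletions, and then match up leaves of $T'$ with leaves of $T$.

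First I would verify that $x_1$ is a $2$-leaf of $T'$ with $L[x_1] = \{x_1, x_2, x_3\}$ and $T'[L[x_1]]$ a path of diameter $2$. Case~1 of Lemma~\ref{Lemma: k-leaf Reduction} (with the single boundary instance $T \cong P_1$, hence $T' \cong P_4$, handled via Lemma~\ref{Lemma: Small Diameter Characterization}) then yields $\gamma_{all,2}^\infty(T') = \gamma_{all,2}^\infty(T) + 1$. Running the same reduction on $T - u$ in place of $T$, for any leaf $u \neq \ell$ of $T$, gives $\gamma_{all,2}^\infty(T' - u) = \gamma_{all,2}^\infty(T - u) + 1$, because $T' - u$ is simply $T - u$ with the same path appended at $\ell$, and $x_1$ remains a $2$-leaf of the required type.

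Next I would dispose of the deletion of $x_3$. In $T' - x_3$ the vertex $x_2$ becomes a leaf, $x_1$ becomes a $1$-leaf, and $\ell$ becomes a $2$-leaf with $L[\ell] = \{\ell, x_1, x_2\}$; the induced subgraph $(T' - x_3)[L[\ell]]$ is a path of diameter $2$. Case~1 of Lemma~\ref{Lemma: k-leaf Reduction} applied to this $2$-leaf (and Lemma~\ref{Lemma: Small Diameter Characterization} in the corner cases where $T-\ell$ is empty or of diameter at most two) then gives $\gamma_{all,2}^\infty(T' - x_3) = \gamma_{all,2}^\infty((T'-x_3) - L[\ell]) + 1 = \gamma_{all,2}^\infty(T - \ell) + 1$.

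Finally, the leaves of $T'$ are exactly $x_3$ together with the leaves of $T$ other than $\ell$ (since $\ell$ now has degree $2$ in $T'$). Combining the three identities above, for every leaf $w$ of $T'$ we have $\gamma_{all,2}^\infty(T' - w) < \gamma_{all,2}^\infty(T')$ if and only if $\gamma_{all,2}^\infty(T - w') < \gamma_{all,2}^\infty(T)$, where $w' = \ell$ when $w = x_3$ and $w' = w$ otherwise. This correspondence is a bijection between the leaves of $T'$ and those of $T$, so $T'$ is eternal distance-$2$ domination critical if and only if $T$ is. The main technical point is ensuring the two reduction identities remain valid when the diameter hypothesis of Lemma~\ref{Lemma: k-leaf Reduction} fails; those small cases are tight but finite, and are handled directly by Lemma~\ref{Lemma: Small Diameter Characterization}.
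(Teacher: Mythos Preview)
Your argument is correct and follows the same route as the paper: apply Lemma~\ref{Lemma: k-leaf Reduction} to the $2$-leaf at the end of the appended path to get $\gamma_{all,2}^\infty(T')=\gamma_{all,2}^\infty(T)+1$, do the same after deleting a common leaf, and after deleting the new endpoint $x_3$ use the $2$-leaf $\ell$ in $T'-x_3$ to obtain $\gamma_{all,2}^\infty(T'-x_3)=\gamma_{all,2}^\infty(T-\ell)+1$, then match leaves bijectively. The only difference is cosmetic: you make explicit the leaf bijection and the small-diameter boundary cases (handled by Lemma~\ref{Lemma: Small Diameter Characterization}) that the paper leaves implicit.
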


\begin{proof}
    Let $v$ be a leaf in both $T$ and $T'$.  By Lemma~\ref{Lemma: k-leaf Reduction}, we have $\gamma_{all, 2}^\infty(T') = \gamma_{all, 2}^\infty(T) + 1$ and $\gamma_{all, 2}^\infty(T' - v) = \gamma_{all, 2}^\infty(T - v) + 1$.  Thus $\gamma_{all, 2}^\infty(T - v) < \gamma_{all, 2}^\infty(T)$ if and only if  $\gamma_{all, 2}^\infty(T' - v) < \gamma_{all, 2}^\infty(T')$.

    Now, let $\ell$ be the leaf of the appended path, and let $x$ be the leaf of $T$ to which the path was appended.  By Lemma~\ref{Lemma: k-leaf Reduction}, we have $\gamma_{all, 2}^\infty(T') = \gamma_{all, 2}^\infty(T) + 1$ and $\gamma_{all, 2}^\infty(T' - \ell) = \gamma_{all, 2}^\infty(T - x) + 1$. 
    Thus $\gamma_{all, 2}^\infty(T - x) < \gamma_{all, 2}^\infty(T)$ if and only if $\gamma_{all, 2}^\infty(T' - \ell) < \gamma_{all, 2}^\infty(T')$.  Therefore, $T'$ is eternal 2-domination critical if and only if $T$ is eternal 2-domination critical.
\end{proof}

\begin{lemma}\label{Lemma: Critical Extension P_4}
     Let $T$ be a tree and let $T'$ be formed by appending a path on two vertices and a single vertex to a leaf of $T$.  Then $T'$ is eternal 2-domination critical if and only if $T$ is eternal 2-domination critical.
\end{lemma}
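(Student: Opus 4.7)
The plan is to follow the blueprint of Lemma~\ref{Lemma: Critical Extension P_3}: compare each leaf of $T'$ to a leaf of $T$ and match their criticality conditions. Let $x$ be the leaf of $T$ to which the structure is appended, and let $a,b,c$ be the new vertices, with edges $xa$, $ab$, $xc$. The leaves of $T'$ are exactly the leaves of $T$ other than $x$, together with $b$ and $c$; the leaves of $T$ other than $x$ will match themselves, and both $b$ and $c$ will be matched with $x$.

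First I would establish the reductions that follow directly from Lemma~\ref{Lemma: k-leaf Reduction}. The vertex $x$ is a $2$-leaf of $T'$ with $L(x)=\{a,b,c\}$, and $T'[L(x)]$ is disconnected (the edge $ab$ together with an isolated $c$), so the ``diameter at least $3$'' case applies and gives $\gamma_{all,2}^\infty(T')=\gamma_{all,2}^\infty(T'-L(x))+1=\gamma_{all,2}^\infty(T)+1$. The same reduction at $x$ applied to $T'-\ell$ for any leaf $\ell\neq x$ of $T$ preserves the structure of $L(x)$ and yields $\gamma_{all,2}^\infty(T'-\ell)=\gamma_{all,2}^\infty(T-\ell)+1$. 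For $T'-c$, the vertex $x$ is still a $2$-leaf but now with $L(x)=\{a,b\}$ inducing a single edge; this falls into the ``one guard suffices'' case of the reduction and gives $\gamma_{all,2}^\infty(T'-c)=\gamma_{all,2}^\infty(T-x)+1$. These three equalities already match the criticality of every leaf of $T'$ except $b$ with the corresponding leaf of $T$.

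The delicate case is $b$. In $T'-b$ the vertex $x$ has two leaf neighbours $a$ and $c$, so Lemma~\ref{Lemma: Stem-Ciritical} gives $\gamma_{all,2}^\infty(T'-b)=\gamma_{all,2}^\infty(T^{+})$, where $T^{+}=T'-b-a$ is the tree obtained from $T$ by attaching a single pendant $c$ at $x$. The main obstacle is that the reduction lemmas do not directly apply to $T^{+}$, since $x$ is only a $1$-leaf there. I would establish the one-sided inequality
\[
\gamma_{all,2}^\infty(T^{+})\ \leq\ \gamma_{all,2}^\infty(T-x)+1
\]
by an explicit strategy: take a minimum eternal distance-$2$ dominating family $\{D_1,\dots,D_m\}$ for $T-x$ and form the family $\{D_i\cup\{x\},\ D_i\cup\{c\}\}$ on $T^{+}$. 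The extra guard stays at $x$ while attacks hit $T-x$ (the original guards respond per their $T-x$ strategy), and shuttles along the edge $xc$ to answer attacks on $x$ or $c$; distances, matchings, and the distance-$2$ domination of $T^{+}$ are routine to verify. Unlike the earlier cases this yields only an inequality rather than an equality, but that is exactly what the biconditional requires.

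Assembling the pieces, the forward direction ($T'$ critical $\Rightarrow T$ critical) uses only the equalities: each leaf of $T$ inherits its criticality from a matching leaf of $T'$ (leaves $\ell\neq x$ via the first reduction, and $x$ via $c$). For the reverse direction ($T$ critical $\Rightarrow T'$ critical), the equalities handle the leaves of $T$ other than $x$ and the new leaf $c$, while for $b$ one chains
\[
\gamma_{all,2}^\infty(T'-b)\ =\ \gamma_{all,2}^\infty(T^{+})\ \leq\ \gamma_{all,2}^\infty(T-x)+1\ \leq\ \gamma_{all,2}^\infty(T)\ <\ \gamma_{all,2}^\infty(T'),
\]
using the criticality hypothesis $\gamma_{all,2}^\infty(T-x)\leq \gamma_{all,2}^\infty(T)-1$. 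The principal technical obstacle is the strategy construction for $T^{+}$: it is the only place where a genuine eternal-domination argument is needed, while the rest of the proof is bookkeeping on top of Lemmas~\ref{Lemma: k-leaf Reduction} and~\ref{Lemma: Stem-Ciritical}.
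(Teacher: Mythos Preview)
Your proposal is correct and follows essentially the same approach as the paper's proof: reduce via Lemma~\ref{Lemma: k-leaf Reduction} at the $2$-leaf $x$ to match each leaf of $T'$ with a leaf of $T$, and for the one leaf (your $b$, the paper's $u$) where only an inequality is available, use the one-sided bound $\gamma_{all,2}^\infty(T'-b)\le \gamma_{all,2}^\infty(T-x)+1$ together with criticality at $x$. The only cosmetic difference is that you route this last bound through $T^{+}$ via Lemma~\ref{Lemma: Stem-Ciritical} and an explicit one-guard shuttle, whereas the paper asserts $\gamma_{all,2}^\infty(T'-u)\le \gamma_{all,2}^\infty(T-v)+1$ directly from the fact that $T'[L[v]\setminus\{u\}]$ has diameter~$2$; both arguments are the same idea.
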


\begin{proof}
Let $T = (V,E)$ be a tree and let $v \in V$ be a leaf in $T$. Form $T'$ by appending a path on two vertices and a single vertex to $v$. Then Lemma~\ref{Lemma: k-leaf Reduction} implies $\gamma_{all,2}^\infty(T') = \gamma_{all,2}^\infty(T'-L(v)) + 1 = \gamma_{all,2}^\infty(T) + 1$ as $T'-L(v) = T$. Let $u$ be the leaf of $T'$ resulting from the path of length $2$ appended to $v$ and let $w$ be the lone vertex appended to $v$.

Let $x$ be a leaf in both $T$ and $T'$.  By Lemma~\ref{Lemma: k-leaf Reduction}, we have $\gamma_{all, 2}^\infty(T') = \gamma_{all, 2}^\infty(T) + 1$ and $\gamma_{all, 2}^\infty(T' - x) = \gamma_{all, 2}^\infty(T - x) + 1$.  Thus $\gamma_{all, 2}^\infty(T - x) < \gamma_{all, 2}^\infty(T)$ if and only if $\gamma_{all, 2}^\infty(T' - x) < \gamma_{all, 2}^\infty(T')$.

By Lemma~\ref{Lemma: k-leaf Reduction}, we have $\gamma_{all, 2}^\infty(T') = \gamma_{all, 2}^\infty(T) + 1$ and $\gamma_{all, 2}^\infty(T' - w) = \gamma_{all, 2}^\infty(T - v) + 1$.  Thus $\gamma_{all, 2}^\infty(T - v) < \gamma_{all, 2}^\infty(T)$ if and only if $\gamma_{all, 2}^\infty(T' - w) < \gamma_{all, 2}^\infty(T')$.

Moreover, it is clear that $\gamma_{all, 2}^\infty(T' - u) \leq \gamma_{all, 2}^\infty(T-v)+1$ given $T'[L[v]\setminus \{u\}]$ has diameter $2$. Recalling $\gamma_{all,2}^\infty(T') = \gamma_{all,2}^\infty(T) + 1$ if $\gamma_{all, 2}^\infty(T - v) < \gamma_{all, 2}^\infty(T)$, then $\gamma_{all, 2}^\infty(T' - u) < \gamma_{all, 2}^\infty(T')$. So if $T$ is eternal distance-$2$ domination critical, then $T'$ is eternal distance-$2$ domination critical.

Suppose now that $T'$ is eternal distance-$2$ domination critical but $T$ is not. Then there exits a leaf $z \in V$ such that $\gamma_{all,2}^\infty(T-z) = \gamma_{all,2}^\infty(T)$. If $z \neq v$, then we have already showed this leads to a contradiction given $z$ will be a leaf in $T$ and $T'$. 

If $z = v$, then $\gamma_{all,2}^\infty(T-v) = \gamma_{all,2}^\infty(T)$. By our assumption that $T'$ is eternal distance-$2$ domination critical $\gamma_{all,2}^\infty(T'-w) = \gamma_{all,2}^\infty(T')-1$ as $T'$ requires more guards than $T'-w$ and placing a guard on $w$ then defending the rest of $T'$ with $\gamma_{all,2}^\infty(T'-w)$ guards is sufficient. Recall Lemma~\ref{Lemma: k-leaf Reduction} implies $\gamma_{all,2}^\infty(T'-w) = \gamma_{all,2}^\infty(T-v)+1$. Hence,
\[
\gamma_{all,2}^\infty(T') = \gamma_{all,2}^\infty(T'-w) + 1 = \gamma_{all,2}^\infty(T-v)+2 = \gamma_{all,2}^\infty(T)+2 
\]
contradicting Lemma~\ref{Lemma: k-leaf Reduction} which implies $\gamma_{all,2}^\infty(T') = \gamma_{all,2}^\infty(T)+1$. Thus, if $T'$ is eternal distance-$2$ domination critical, then $T$ is eternal distance-$2$ domination critical. This concludes the proof.
\end{proof}

The previous results allow us to provide a characterization of eternal distance-2 domination critical graphs.

\begin{theorem} \label{Thm: Critical Trees}
    Let $\mathcal{C}$ be the family of all trees $T$ that can be obtained from the sequence $T_1, \ldots, T_j$ of trees such that $T_1$ is the single vertex tree $K_1$ and $T = T_j$, such that $T_{i + 1}$ is the 1-sum of $P_4$ and $T_i$ at a leaf of $T_i$ and any vertex of $P_4$.  If $T$ is a tree, then $T$ is eternal distance-2 domination critical if and only if $T \in \mathcal{C}$.
\end{theorem}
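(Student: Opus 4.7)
My plan is to prove both directions by induction, using Lemmas~\ref{Lemma: Critical Extension P_3} and~\ref{Lemma: Critical Extension P_4} as the engine. The key starting observation is that the $1$-sum of $P_4$ with $T_i$ at a leaf of $T_i$ and a vertex of $P_4$ produces, up to isomorphism, exactly two possibilities: identifying the leaf of $T_i$ with a leaf of $P_4$ yields the tree obtained by appending a path on three vertices to the chosen leaf (the operation of Lemma~\ref{Lemma: Critical Extension P_3}), while identifying with an interior vertex of $P_4$ yields the tree obtained by appending a path on two vertices and a single vertex to the chosen leaf (the operation of Lemma~\ref{Lemma: Critical Extension P_4}).

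For the forward direction (every $T\in\mathcal{C}$ is critical), I would induct on the length $j$ of the build sequence $T_1,\ldots,T_j=T$. The base case $T_1=K_1$ is vacuously critical, since it has no non-cut vertices. For the inductive step, $T_{i+1}$ is obtained from $T_i$ by one of the two extensions above, so whichever of Lemma~\ref{Lemma: Critical Extension P_3} or~\ref{Lemma: Critical Extension P_4} applies equates criticality of $T_{i+1}$ with that of $T_i$; combining with the inductive hypothesis yields $T_{i+1}$ critical, and thus $T=T_j$ critical.

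For the converse direction (every critical tree lies in $\mathcal{C}$), I would induct on $|V(T)|$. The base cases use Lemmas~\ref{Lemma: Small Diameter Characterization} and~\ref{Lemma: Stem-Ciritical} to show that the only critical trees of diameter at most $3$ are $K_1$ and $P_4$, both of which are immediately seen to be in $\mathcal{C}$. For the inductive step, suppose $T$ is critical with diameter at least $4$; Lemma~\ref{Lemma: Big Diameter Implies k-leaf} produces a $2$-leaf $v$, and Lemma~\ref{Lemma: Critical L[v]} guarantees $T[L[v]]\cong P_3$ or $P_4$. In the $P_4$ case, $v$ has a $1$-leaf neighbour $u$ with its leaf $\ell$ and also a leaf neighbour $\ell'$; then $T':=T-L(v)$ has $v$ as a leaf, and $T$ is precisely the $P_2{+}P_1$-branching extension of $T'$ at $v$. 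Lemma~\ref{Lemma: Critical Extension P_4} forces $T'$ to be critical, so by the inductive hypothesis $T'\in\mathcal{C}$, whence $T\in\mathcal{C}$ via one further $1$-sum step. The $P_3$ case is treated analogously via Lemma~\ref{Lemma: Critical Extension P_3}.

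The main obstacle I foresee is the $P_3$ subcase of the converse: one wants to exhibit $T$ as a pendant-$P_3$ extension of a smaller critical tree at the parent $a$ of $v$, but Lemma~\ref{Lemma: Critical Extension P_3} requires $a$ to be a leaf of the reduced tree $T-\{v,u,\ell\}$, and this is automatic only when $\deg_T(a)=2$. I expect to handle this by choosing $v$ carefully — for instance as the $2$-leaf predecessor of an endpoint of a diametrical path in $T$ — and combining this with a sharpened structural analysis at $a$ (using Lemma~\ref{Lemma: Stem-Ciritical} to rule out additional pendants at $a$ that would obstruct the reduction) so that removing $L[v]$ really does produce a tree in which $a$ is a leaf, fitting the Lemma~\ref{Lemma: Critical Extension P_3} hypothesis exactly.
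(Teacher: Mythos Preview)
Your overall plan coincides with the paper's: the forward direction is handled exactly as you describe, and for the converse the paper too locates a $2$-leaf $v$, invokes Lemma~\ref{Lemma: Critical L[v]}, and then asserts in one line that ``$Y$ is the $1$-sum of $P_4$ with some tree $Z$'' before applying Lemmas~\ref{Lemma: Critical Extension P_3} and~\ref{Lemma: Critical Extension P_4}. So the obstacle you flag in the $P_3$ subcase is precisely the step the paper leaves unjustified.

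Unfortunately your proposed repair cannot work, because the obstruction is not a technicality but a counterexample to the statement itself. Let $Y$ be the spider with centre $a$ and three legs $a\,v_i\,u_i\,\ell_i$ ($i=1,2,3$). An attack on $a$ pins one guard at $a$, which is distance $3$ from every $\ell_i$, while each $\ell_i$ still requires a guard in its own leg; hence $\gamma_{all,2}^\infty(Y)\ge 4$, and Lemma~\ref{Lemma: k-leaf Reduction} applied at $v_1$ gives $\gamma_{all,2}^\infty(Y)=\gamma_{all,2}^\infty(P_7)+1=4$. Deleting any leaf gives, by the same reduction, $\gamma_{all,2}^\infty(Y-\ell_i)=\gamma_{all,2}^\infty(P_6)+1=3$, so $Y$ is eternal distance-$2$ domination critical. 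Yet $Y\notin\mathcal{C}$: any $T_3\in\mathcal{C}$ on seven vertices is either $P_7$ or the spider with legs $1,2,3$, and one checks directly that no $1$-sum of $P_4$ with either of these at a \emph{leaf} produces $Y$. In $Y$ every $2$-leaf $v_i$ has $T[L[v_i]]\cong P_3$ with parent $a$ of degree $3$; Lemma~\ref{Lemma: Stem-Ciritical} is silent since $a$ has no pendant leaves, and by symmetry no choice of diametrical endpoint changes the picture. The gap you identified is therefore real, the paper's proof does not close it, and the theorem as stated fails.
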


\begin{proof}
    Suppose $T \in \mathcal{C}$.  If $T = K_1$, then $T$ is trivially eternal distance-2 domination critical.  If $T = P_4$, we have $\gamma_{all, 2}^\infty(P_4) = 2$ and $\gamma_{all, 2}^\infty(P_3) = 1$ by Lemma~\ref{Lemma: Small Diameter Characterization}, so $T$ is eternal distance-2 domination critical.

    Suppose all trees $S \in \mathcal{C}$ on fewer than $n$ vertices are eternal distance-2 domination critical, and let $T \in \mathcal{C}$ be a tree with $n$ vertices.  Then $T$ is the 1-sum of $P_4$ and some $T' \in \mathcal{C}$ at a leaf of $T'$.  It follows by Lemma~\ref{Lemma: Critical Extension P_3} or Lemma~\ref{Lemma: Critical Extension P_4} that $T$ is eternal distance-2 domination critical.

    Conversely, suppose there exists a tree $X$ such that $X \notin \mathcal{C}$ but $X$ is eternally distance-2 domination critical.  Let $Y$ be a minimal such tree with respect to induced subgraph.  If the diameter of $Y$ is at most 2, then by Lemma~\ref{Lemma: Small Diameter Characterization}, $\gamma_{all, 2}^\infty(T) = 1$, so $Y = K_1$, and $Y \in \mathcal{C}$, a contradiction.  If the diameter of $Y$ is 3, then Lemma~\ref{Lemma: Stem-Ciritical} implies $Y = P_4$, so $Y \in \mathcal{C}$, a contradiction.  Hence, we may assume $Y$ has diameter at least 4.  Therefore, by Lemma~\ref{Lemma: Big Diameter Implies k-leaf}, $Y$ has a 2-leaf $v$, and by Lemma~\ref{Lemma: Critical L[v]}, $L[v]$ is isomorphic to $P_3$ or $P_4$.  It follows that $Y$ is the 1-sum of $P_4$ with some tree $Z$.  By Lemma~\ref{Lemma: Critical Extension P_3} and Lemma~\ref{Lemma: Critical Extension P_4}, $Z$ is eternal distance-2 domination critical, so by minimality of $Y$, $Z \in \mathcal{C}$.  Hence, by construction, $Y \in \mathcal{C}$, which is the contradiction completing the proof.
\end{proof}

\section{Trees with $\gamma_{all,2}^\infty = \gamma_2$}

Recall the family $\mathbb{T}$ introduced for Theorem~\ref{lemma: gamma=gamma_2}. 
 Perhaps surprisingly we will show that $\mathbb{T}$ is also the class of graphs with $\gamma_2(T) = \gamma_{all,2}^\infty(T)$. That is there is no tree $T$ such that $\gamma_2(T) = \gamma_{all,2}^\infty< \gamma(T)$. Equivalently $\gamma_{all,2}^\infty(T) = \gamma_2(T)$ if and only if $\gamma(T)= \gamma_2(T)$.

\begin{theorem}\label{Thm: gamma_2 = gamma_(all,2)}
Let  $T$ be a tree. Then $\gamma_{all, 2}^\infty(T) = \gamma_2(T)$ if and only if $T \in \mathbb{T}$.
\end{theorem}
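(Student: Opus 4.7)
The plan is to prove this biconditional in two directions. The easier direction, $T \in \mathbb{T} \Rightarrow \gamma_{all, 2}^\infty(T) = \gamma_2(T)$, follows immediately from Theorem~\ref{lemma: gamma=gamma_2} (which gives $\gamma(T) = \gamma_2(T)$ whenever $T \in \mathbb{T}$) combined with the sandwich inequality $\gamma_2(T) \le \gamma_{all, 2}^\infty(T) \le \gamma(T)$ stated at the start of Section~2, which collapses to an equality under the hypothesis.

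For the converse, I will prove the contrapositive by strong induction on $|V(T)|$: if $T \notin \mathbb{T}$, equivalently $\gamma_2(T) < \gamma(T)$ by Theorem~\ref{lemma: gamma=gamma_2}, then $\gamma_{all, 2}^\infty(T) > \gamma_2(T)$. For the base cases, trees of diameter at most $2$ have $\gamma(T) = \gamma_2(T) = 1$ (placing them vacuously in $\mathbb{T}$, so the premise fails), while trees of diameter $3$ have $\gamma_{all, 2}^\infty(T) = 2 > 1 = \gamma_2(T)$ by Lemma~\ref{Lemma: Small Diameter Characterization}. For the inductive step with $\diam(T) \ge 4$, I invoke Lemma~\ref{Lemma: Big Diameter Implies k-leaf} to obtain a $2$-leaf $v$ and Lemma~\ref{Lemma: k-leaf Reduction} to reduce to a tree $T'$ satisfying $\gamma_{all, 2}^\infty(T) = \gamma_{all, 2}^\infty(T') + 1$.

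The central technical step is to analyze how $\gamma_2$ and $\gamma$ change under this reduction. An extension argument (adjoining a vertex of $L[v]$, such as $v$ or a $1$-leaf neighbour $u$, to a minimum distance-$2$ dominating set of $T'$) together with a restriction argument (removing the $L[v]$-portion of a minimum distance-$2$ dominating set of $T$ and possibly adjoining the parent $w$ of $v$ to $D \cap V(T')$) yields $\gamma_2(T') \le \gamma_2(T) \le \gamma_2(T') + 1$. I then split into two cases. In Case~A, $\gamma_2(T) = \gamma_2(T') + 1$, and a parallel case analysis for $\gamma$ (mirroring the reverse direction of Theorem~\ref{Thm: gamma = gamma_(all,2)}'s proof) gives $\gamma(T) = \gamma(T') + 1$, so $\gamma(T) - \gamma_2(T) = \gamma(T') - \gamma_2(T')$, whence $T \in \mathbb{T} \iff T' \in \mathbb{T}$ by Theorem~\ref{lemma: gamma=gamma_2}; the inductive hypothesis applied to $T'$ then yields the biconditional for $T$. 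In Case~B, $\gamma_2(T) = \gamma_2(T')$, so $\gamma_{all, 2}^\infty(T) = \gamma_{all, 2}^\infty(T') + 1 > \gamma_2(T') = \gamma_2(T)$ directly; a structural argument, exploiting that any minimum distance-$2$ dominating set $D$ of $T$ in this situation must contain $v$ or its $1$-leaf neighbour $u$ (and neither dominates every leaf of $L[v]$ together with $w$ at distance $1$), then shows $\gamma(T) > \gamma_2(T)$, confirming $T \notin \mathbb{T}$. Both sides of the biconditional are then false, and the conclusion holds consistently.

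The main obstacle will be confirming Case~A's identity $\gamma(T) = \gamma(T') + 1$ across all sub-structures of $v$, particularly in the ``otherwise'' branch of Lemma~\ref{Lemma: k-leaf Reduction} where $T[L(v)]$ has diameter other than $2$ and $v$ may have multiple $1$-leaf neighbours; this requires a careful inspection of how minimum dominating sets must intersect $L[v]$ together with $w$, following the pattern established in the reverse direction of Theorem~\ref{Thm: gamma = gamma_(all,2)} and using the fact that leaves in $L(v)$ can only be dominated from within $L[v]$.
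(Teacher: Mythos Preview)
Your forward direction is fine and matches the paper. The gap is in Case~A of your reverse direction: the claim that $\gamma(T) = \gamma(T') + 1$ whenever $\gamma_2(T) = \gamma_2(T') + 1$ is simply false. Take $T = P_6$ with $2$-leaf $v = v_2$; then $L(v_2) = \{v_0,v_1\}$, $T[L(v_2)]$ has diameter $1$, so $T' = P_4$. One checks $\gamma_2(P_6) = 2 = \gamma_2(P_4)+1$ (Case~A), yet $\gamma(P_6) = 2 = \gamma(P_4)$, not $\gamma(P_4)+1$. Consequently your derived equivalence ``$T \in \mathbb{T} \iff T' \in \mathbb{T}$'' also fails here: $P_6 \in \mathbb{T}$ but $P_4 \notin \mathbb{T}$. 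You might object that $P_6 \in \mathbb{T}$, so the contrapositive hypothesis $T \notin \mathbb{T}$ is not in force; but your written argument asserts $\gamma(T)=\gamma(T')+1$ \emph{unconditionally} in Case~A and uses it to get the two-way equivalence. Without that, you would need to prove directly that $T \notin \mathbb{T}$ forces $T' \notin \mathbb{T}$ in Case~A, and this requires controlling $\gamma$ under the $2$-leaf reduction, which can move by $0$, $1$, or even $2$ depending on how many $1$-leaf children $v$ has and whether $v$ lies in a minimum dominating set of $T'$. You flag this as ``the main obstacle'' but do not supply the missing case analysis, and it is genuinely delicate.

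The paper sidesteps this entirely by abandoning the $2$-leaf reduction for the converse. It assumes $\gamma_{all,2}^\infty(T) = \gamma_2(T)$, fixes a longest path $v_0 v_1 \cdots v_k$, and argues vertex by vertex that $\deg(v_1)=\deg(v_2)=2$ and then splits on $\deg(v_3)$; in each sub-case it exhibits a concrete smaller tree $T'$ with $\gamma_{all,2}^\infty(T') = \gamma_2(T')$ (hence $T' \in \mathbb{T}$ by induction) and shows that $T$ is obtained from $T'$ by one of the operations $\mathbb{T}_1$, $\mathbb{T}_2$, or $\mathbb{T}_3$. The key device is to pick a configuration $D$ of guards \emph{after a specific attack} and compare $|D|$ to $|D'|$ for a modified set $D'$, exploiting the hypothesis $\gamma_{all,2}^\infty(T)=\gamma_2(T)$ to rule out bad local structures. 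This never requires tracking $\gamma$ at all.
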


\begin{proof}
If $T\in \mathbb{T}$, then Lemma~\ref{lemma: gamma=gamma_2} implies $\gamma(T) = \gamma_2(T)$. But $\gamma_2(T) \leq \gamma_{all,2}^\infty(T)$ trivially while $\gamma_{all,2}^\infty(T) \leq \gamma(T)$ by Lemma~\ref{Lemma: Domination >= Eternal 2-Domination}. Hence, $\gamma(T) = \gamma_2(T)$ implies $\gamma_2(T) = \gamma_{all,2}^\infty(T)$.

Let $T$ be a tree with $\gamma_{all, 2}^\infty(T) = \gamma_2(T)$.  Let $v_0, v_1, \ldots, v_k$ be a longest path in $T$.  If $k \le 2$, then $T$ is $P_1$ or a star $K_{1, m}$ for some non-negative integer $m$, and clearly $T$ is in $\mathbb{T}$.

If $k \in \{3, 4\}$, then $\gamma_2(T) = 1$, but $\gamma_{all, 2}^\infty(T) > 1$.  Hence, we may assume $k \ge 5$.  We proceed by induction on the number $n(T)$ of vertices of a tree $T$ with $\gamma_{all, 2}^\infty(T) = \gamma_2(T)$. It is easy to check that there is no graph on strictly less than $6$ vertices with $k\geq 5$ and $\gamma_{all, 2}^\infty(T) = \gamma_2(T)$. Suppose then that $n(T) \geq 6$. If $n(T) = 6$, then $T \cong P_6$ is the only graph with $\gamma_{all, 2}^\infty(T) = \gamma_2(T)$ and $P_6 \in \mathbb{T}$, by applying operation $\mathbb{T}_3$ to $P_2$.  Now let $T$ be a tree with $\gamma_{all, 2}^\infty(T) = \gamma_2(T)$ and $n(T) \ge 7$, and assume that each tree $T'$ with $n(T') < n(T)$, $k \ge 5$, and $\gamma_{all, 2}^\infty(T') = \gamma_2(T')$ is in $\mathbb{T}$.

Suppose $T$ contains a stem $v$ with leaves $x$ and $y$.  By Lemma~\ref{Lemma: Stem-Ciritical}, we have $\gamma_{all, 2}^\infty(T) = \gamma_{all, 2}^\infty(T - x)$.  It is clear that any 2-dominating set of $T - x$ also dominates $T$, since the guard that dominates $y$ is at most distance two from both $x$ and $y$.  Thus $\gamma_2(T) = \gamma_2(T - x)$.  Therefore, $\gamma_{all, 2}^{\infty}(T - x) = \gamma_2(T - x)$, so $T - x \in \mathbb{T}$.  But then $T \in \mathbb{T}$ by Operation $\mathbb{T}_1$, a contradiction.  Thus, we may assume every stem of $T$ has exactly one leaf.

As $v_0$ must be a leaf, this implies $\deg(v_1) = 2$.  Suppose $\deg(v_2) > 2$.  Then $v_2$ is adjacent to a stem or leaf $x$.  Let $D$ be the configuration of a minimum set of guards to eternally distance-2 dominate $T$ after an attack on $x$.  Then in order to defend against a possible attack at $v_0$, at least one of $v_0, v_1, v_2 \in D$.  Let $D' = (D - L[v_2]) \cup \{v_2\}$.  Then $|D'| < |D|$ and $D'$ distance 2-dominates $T$, contradicting that $\gamma_{all, 2}^\infty(T) = \gamma_2(T)$.  Thus $\deg(v_2) = 2$.

Suppose $\deg(v_3) > 2$.  If $v_3$ is a stem with leaf $x$, then let $D$ be the configuration of a minimum set of guards to eternally distance-2 dominate $T$ after an attack on $x$.  Then in order to defend against a possible attack at $v_0$, at least one of $v_0, v_1, v_2 \in D$.  Let $D' = (D - L[v_2] - \{x\}) \cup \{v_2\}$.  Then $|D'| < |D|$ and $D'$ distance-2 dominates $T$, contradicting that $\gamma_{all, 2}^\infty(T) = \gamma_2(T)$.  Thus, $v_3$ is not a stem of $T$.  Let $T' = T - L[v_2]$.  Since $\deg_T(v_3) > 2$, $v_3$ is not a leaf in $T'$, and since $k \ge 5$, $v_3$ is not a stem in $T'$.  By Lemma~\ref{Lemma: k-leaf Reduction}, we have $\gamma_{all, 2}^\infty(T) = \gamma_{all, 2}^\infty(T') + 1$, and $\gamma_2(T) \le \gamma_2(T') + 1$.  Therefore,  Lemma~\ref{Lemma: Domination >= Eternal 2-Domination} implies,
\[
\gamma_2(T) \le \gamma_2(T') + 1 \le \gamma_{all, 2}^\infty(T') + 1 = \gamma_{all, 2}^\infty(T) = \gamma_2(T).
\]
Thus, $\gamma_2(T') = \gamma_{all, 2}^\infty(T')$ so by the induction hypothesis, $T' \in \mathbb{T}$.  Since $T$ is obtained from $T'$ by operation $\mathbb{T}_2$, we conclude $T \in \mathbb{T}$.

Thus, we assume $\deg(v_3) = 2$.  Let $D$ be a configuration of a minimum set of guards to eternally distance-2 dominate $T$ after an attack on $v_3$.  Then in order to defend against a possible attack at $v_0$, at least one of $v_0, v_1, v_2 \in D$.  Let $D' = (D - L[v_3]) \cup \{v_2\}$.  Since $|D'| < |D|$, we have that $D'$ is not a distance-2 dominating set of $T$.  Thus $v_3 \in D$ and $v_4$ has a neighbour, say $u$, that is not distance-2 dominated by $D'$. 

Consider the components of $T - u$.  Each component can be eternally distance-2 dominated by its vertices in $D$, since no vertex of $D$ except $v_3$ is within distance 
$2$ of $u$. If $\deg(u) > 1$, then let $z \neq v_4$ be a neighbour of $u$, and let $\hat{D}$ be the configuration of the guards after an attack on $z$, starting from the configuration $D$. It is clear that the guard which defends the attack at $z$ must be in a component of $T-u$ distinct from $v_4$, as otherwise $v_4 \in D$ which would imply that $u$ be within distance $2$ of a vertex in $D'$. Let $\hat{D}_u = \hat{D}$. 

Then for all non-leaf neighbours of $v_4$ that are not distance-$2$ dominated in $D'$, $u_1,\dots, u_l$, we can define sets $\hat{D}_{u_i}$ as above. Furthermore, as each attack on a vertex $z_i = z$ (given $u_i = u$) calls for a distinct set of guards $g_i$ to defend against it, there is a eternally distance-$2$ dominating set $\mathcal{D}$ given by all guards $g_i$ defending against attacks at $z_i$, for all $1 \leq i \leq l$, while the rest of the guards in $D$ do not move. Then $\mathcal{D}' = (\mathcal{D} - L[v_3]) \cup \{v_2\}$ is a distance-2 dominating set of $T$, given all vertices of the $2$-neighbourhood of $v_3$ are dominated, and $|\mathcal{D}'| < |\mathcal{D}|$,  contradicting that $\gamma_{all, 2}^\infty(T) = \gamma_2(T)$.  Hence all neighbours of $v_4$ are leaves in $T$, and therefore $v_4$ is a stem. Let $T' = T - L[v_3]$. We can verify that $\gamma_{all, 2}^\infty(T') + 1 = \gamma_{all, 2}^\infty(T)$ and $\gamma_2(T') + 1 = \gamma_2(T)$.  Thus, $\gamma_{all, 2}^\infty(T') = \gamma_2(T')$, so by the induction hypothesis, $T' \in \mathbb{T}$.  Since $T$ is obtained from $T'$ by operation $\mathbb{T}_3$, we conclude $T \in \mathbb{T}$. 
\end{proof}

\section{Extremal Families of Trees for Eternal Distance-$k$ Domination}

Motivated by a question posed in \cite{cox2023eternal}, we explore the extreme values the eternal distance-$k$ domination number can take in trees. We begin by giving a general upper bound for the eternal distance-$k$ domination number of trees. As the eternal distance-$k$ domination number of a graph is upper bounded by the eternal distance-$k$ domination number of its spanning subgraphs, this implies a general upper bound for connected graphs. Note this generalises a result of Chambers, Kinnersley, and Prince \cite{chambers2006mobile}. Beyond this we construct families of trees which meet this upper bound, as well as a family of trees which has eternal domination number equal to $\frac{n}{1+ \sum_{i=1}^k\Delta(\Delta-1)^{i-1}}$, which is the lowest possible value the distance-$k$ domination number can take.

\begin{theorem}\label{Thm: General Upper Bound}
If $G$ is a connected graph of order $n$, then $\frac{n}{1+ \sum_{i=1}^k\Delta(\Delta-1)^{i-1}} \leq \gamma_{all,k}^\infty(G) \leq \left \lceil \frac{n}{k+1} \right \rceil$.
\end{theorem}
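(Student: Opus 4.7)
The plan handles the two bounds separately. For the lower bound, I combine a Moore-type counting argument with the inequality $\gamma_k(G) \leq \gamma_{all,k}^\infty(G)$ from the proposition of Section~2. Fix any $v \in V(G)$: at most $\Delta$ vertices lie at distance $1$ from $v$, and inductively at each subsequent distance at most $\Delta - 1$ new vertices appear per predecessor (one incident edge is used to arrive), so the number at distance exactly $i \geq 1$ is at most $\Delta(\Delta-1)^{i-1}$. Summing gives $|N_k[v]| \leq 1 + \sum_{i=1}^k \Delta(\Delta-1)^{i-1}$. Since any distance-$k$ dominating set $D$ satisfies $V(G) = \bigcup_{v \in D} N_k[v]$, counting yields $\gamma_k(G) \geq n/(1 + \sum_{i=1}^k \Delta(\Delta-1)^{i-1})$, and the lower bound on $\gamma_{all,k}^\infty(G)$ follows at once.

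For the upper bound, my main tool is the spanning subgraph observation: for any spanning subgraph $H$ of $G$, distances in $H$ are weakly larger than those in $G$, so every valid eternal distance-$k$ dominating strategy on $H$ is valid on $G$; hence $\gamma_{all,k}^\infty(G) \leq \gamma_{all,k}^\infty(H)$. Taking $H$ to be any spanning tree $T$ of $G$, it suffices to prove the tree bound $\gamma_{all,k}^\infty(T) \leq \lceil n/(k+1) \rceil$, which I would establish by strong induction on $n$, with the base case handled by Lemma~\ref{Lemma: Small Diameter Characterization} whenever $T$ has diameter at most $k$ (in particular for all $n \leq k+1$).

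For the inductive step, assume $T$ has diameter at least $k+1$, take a longest path $v_0 v_1 \cdots v_m$ with $m \geq k+1$, and let $T_0$ be the component of $v_0$ in $T - v_k v_{k+1}$. The longest-path property forces every vertex of $T_0$ within distance $k$ of $v_k$, and $T_0 \supseteq \{v_0, \ldots, v_k\}$ gives $|T_0| \geq k+1$. If $T_0$ has diameter at most $k$, a single guard eternally defends $T_0$ (it can reach any attack in one move while still dominating $T_0$ from its new position since the diameter bound places every vertex of $T_0$ within distance $k$ of the guard), and the induction hypothesis applied to the connected tree $T - T_0$ of at most $n - k - 1$ vertices gives $\gamma_{all,k}^\infty(T) \leq 1 + \lceil (n-k-1)/(k+1) \rceil = \lceil n/(k+1) \rceil$. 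The remaining case, where $T_0$ has diameter in $[k+1, 2k]$, is the main obstacle: here one must extract from $T_0$ a connected subtree $S$ with $|S| = k+1$, diameter at most $k$, $v_k \notin S$, and $S$ forming a pendant subtree of $T_0$ attached through a single cut vertex, so that $T - S$ remains connected and the same compose-two-strategies argument applies. Producing $S$ requires careful tree surgery, most naturally by rooting $T_0$ at $v_k$, locating a deepest vertex $w \neq v_k$ whose descendant subtree has size at least $k+1$, and greedily pruning down to $k+1$ descendants forming a diameter-$\leq k$ pendant; in pathological cases (e.g., when $T_0$ is a spider centred at $v_k$ with several legs of length $k$) satisfying all constraints simultaneously is delicate, and the proof may need an auxiliary reduction handling such subtrees directly through a bespoke placement of multiple guards within $T_0$.
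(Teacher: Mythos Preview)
Your lower bound and the reduction to spanning trees are correct and match the paper. The gap is precisely where you flag it: Case~2, when $T_0$ has diameter in $[k+1,2k]$. The pendant-subtree surgery you sketch cannot always succeed. In your own spider example (center $v_k$, several legs of length $k$), every connected pendant subtree of $T_0$ that avoids $v_k$ lies inside a single leg and therefore has at most $k$ vertices, so no $S$ with $|S|=k+1$, $v_k\notin S$, and $T-S$ connected exists. The fallback you mention (``bespoke placement of multiple guards within $T_0$'') is not a proof, and any scheme that commits several guards to $T_0$ will not combine with the inductive bound on $T-T_0$ to give $\lceil n/(k+1)\rceil$.

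The paper closes this gap with one idea that replaces your Case~2 outright: when $T_0$ has diameter greater than $k$, do not delete all of $T_0$; delete only $L(v_k)=T_0\setminus\{v_k\}$ and set $T'=T-L(v_k)$, so that $v_k$ stays in $T'$. Place one extra guard $g$ at $v_k$ and run an optimal eternal strategy on $T'$ with the remaining guards. When some $u\in L(v_k)$ is attacked, $g$ moves to $u$ (distance at most $k$) while the $T'$-guards respond as though $v_k$ had been attacked, which parks one of them at $v_k$. From then on the two guards in $\{v_k\}\cup L(v_k)$ alternate: the one at $v_k$ answers the next attack in $L(v_k)$ while the other returns to $v_k$; if the next attack is in $T'$, the guard at $v_k$ rejoins the $T'$-strategy and the other comes back to $v_k$. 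This is exactly the Case~2 mechanism from the proof of Lemma~\ref{Lemma: k-leaf Reduction}, and it yields $\gamma_{all,k}^\infty(T)\le\gamma_{all,k}^\infty(T')+1$. Since $T_0$ having diameter $>k$ forces $|T_0|\ge k+2$, one has $|L(v_k)|\ge k+1$, so $|V(T')|\le n-(k+1)$ and induction gives the bound. (The paper also pushes the entire range $k<\operatorname{diam}(T)<2k$ into the base case via Lemma~\ref{Lemma: Small Diameter Characterization}, which spares it the need to find a $k$-leaf there.)
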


\begin{proof}
The lower bound is trivial as each vertex can distance-$k$ dominate at most $1+ \sum_{i=1}^k\Delta(\Delta-1)^{i-1}$ vertices. So $\gamma_{all,k}^\infty(G) \geq \gamma_k(G) \geq \frac{n}{1+ \sum_{i=1}^k\Delta(\Delta-1)^{i-1}}$. It remains to be shown that $\gamma_{all,k}^\infty(G) \leq \left \lceil \frac{n}{k+1} \right \rceil$.

As deleting edges will never help the guards defend the graph it is sufficient to prove the result for trees. Suppose then that $G = T = (V,E)$ is a tree. When the diameter of $T$ is strictly less than $2k$ the result follows by Lemma~\ref{Lemma: Small Diameter Characterization}. Otherwise, the diameter of $T$ is at least $2k$, in which case Lemma~\ref{Lemma: Big Diameter Implies k-leaf} implies that there exists a $k$-leaf $v \in V$. 

Suppose $T$ is a smallest counterexample. Then the diameter of $T$ is at least $2k$. Let $v \in V$ be a $k$-leaf in $T$ and let $T'=T - L[v]$ when the diameter of $T[L[v]]$ is $k$, and $T'=T - L(v)$ otherwise. By the definition of a $k$-leaf this implies $|V(T')|\leq n-(k+1)$. By the minimality of $T$, $T'$ is not a counterexample, hence, 
\[
\gamma_{all,k}^\infty(T') \leq \left \lceil \frac{n-(k+1)}{k+1} \right \rceil = \left \lceil \frac{n}{k+1} \right \rceil - 1.
\]
Let $\gamma_{all,k}^\infty(T')$ guards defend the subgraph $T'$ of $T$ and place a single guard on $v$. Note that this will be at most $\left \lceil \frac{n}{k+1} \right \rceil$ guards. If the diameter $T[L[v]]$ is $k$, then the guard at $v$ can protect $L[v]$ indefinitely while the $\gamma_{all,k}^\infty(T')$ guards protect $T'$ indefinitely. Otherwise, the diameter $T[L[v]]$ is at least $k+1$. In this case, let the guards proceed by the same strategy described in case 2 of the proof of Lemma~\ref{Lemma: k-leaf Reduction}. This strategy will protect $T$ indefinitely.

Thus, 
\[
\gamma_{all,k}^\infty(T) \leq \gamma_{all,k}^\infty(T') +1 \leq \left \lceil \frac{n}{k+1} \right \rceil,
\]
which concludes the proof.
\end{proof}

Note that \cite{cox2023eternal} showed that paths are a family of graphs that meet the upper bound of Theorem~\ref{Thm: General Upper Bound}. The reason for this is because in any distance-$k$ dominating set we must maintain guards within distance $k$ of each leaf, therefore to protect against attacks on leaves these guards can never leave the $k$ neighbourhood of each leaf. Hence, more guards are required to protect vertices which are distance slightly more than $k$ from a leaf. By induction this will force paths to require the maximum possible number of guards.

Using a similar ideas we construct a large family $\mathcal{T}_{M,k}$ of trees which also have eternal distance-$k$ domination number  $\left \lceil \frac{n}{k+1} \right \rceil$. We define $\mathcal{T}_{M,k}$ as follows. Let $T'$ be any tree, construct $T \in \mathcal{T}_{M,k}$ by connecting a path $P_v = v_1 \dots v_{k}$ to each vertex $v \in V(T')$. That is for each $v$ add an edge $(v,v_1)$ so that $P_v$ becomes part of $T$.

\begin{figure}
    \centering
    \includegraphics[scale = 0.5]{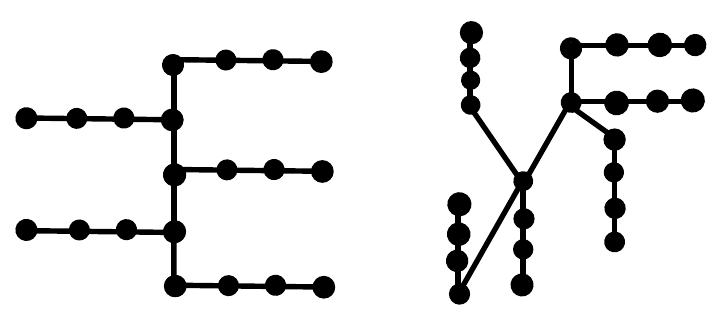}
    \caption{Two examples of trees in $\mathcal{T}_{M,3}$.}
    \label{fig:T_{M,k} example}
\end{figure}

\begin{theorem}
If $T \in \mathcal{T}_{M,k}$, then $\gamma_{all,k}^\infty(T) = \left \lceil \frac{n}{k+1} \right \rceil$.
\end{theorem}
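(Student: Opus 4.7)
The plan is to show both inequalities. The upper bound $\gamma_{all,k}^\infty(T) \le \lceil n/(k+1) \rceil$ is immediate from Theorem~\ref{Thm: General Upper Bound}, so all of the work lies in establishing the matching lower bound $\gamma_{all,k}^\infty(T) \ge \lceil n/(k+1) \rceil$.

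For the lower bound, I would first observe the key structural fact about the family $\mathcal{T}_{M,k}$: if $T$ is built from $T'$ by attaching a length-$k$ path $P_v = v_1 v_2 \dots v_k$ at each $v \in V(T')$, then $n = |V(T)| = (k+1)|V(T')|$, so in particular $\lceil n/(k+1) \rceil = |V(T')|$ exactly. For each $v \in V(T')$ define the block $S_v = \{v, v_1, v_2, \ldots, v_k\}$. These blocks are pairwise disjoint and partition $V(T)$, and each has $k+1$ vertices.

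Next I would show the crucial distance fact: the only vertices of $T$ within distance $k$ of the pendant leaf $v_k$ are exactly the vertices of $S_v$. Indeed, $\dist(v_k, v_i) = k - i \le k$ and $\dist(v_k, v) = k$, while any other vertex $u$ of $T$ lies in some other block $S_w$ with $w \neq v$, and the unique path from $v_k$ to $u$ must pass through $v$ and then travel at least one edge into $T'$, giving $\dist(v_k, u) \ge k + 1$. Consequently, in every distance-$k$ dominating set of $T$ there must be at least one guard in $S_v$, since otherwise $v_k$ would not be distance-$k$ dominated and an attack at $v_k$ could not be answered.

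Applying this simultaneously to every $v \in V(T')$ and using the disjointness of the blocks, every valid configuration of guards in an eternal distance-$k$ dominating family uses at least one guard per block, hence at least $|V(T')| = n/(k+1)$ guards in total. Combined with the upper bound from Theorem~\ref{Thm: General Upper Bound}, this gives $\gamma_{all,k}^\infty(T) = \lceil n/(k+1) \rceil$, as required. No step looks delicate here; the only thing one must be slightly careful about is verifying the distance claim in $T$ (which uses that $T$ is a tree, so paths are unique), and this is essentially all that the argument needs.
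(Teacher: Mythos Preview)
Your proposal is correct and follows essentially the same approach as the paper: both use Theorem~\ref{Thm: General Upper Bound} for the upper bound, and for the lower bound both observe that the disjoint blocks $S_v = P_v \cup \{v\}$ each contain a leaf whose entire distance-$k$ neighbourhood lies in $S_v$, forcing at least $|V(T')| = n/(k+1)$ guards in any distance-$k$ dominating set. Your write-up is slightly more explicit about the distance computation, but the argument is the same.
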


\begin{proof}
Let $T = (V,E) \in \mathcal{T}_{M,k}$, then $T$ is given by appending paths of length $k$ to each vertex in some other tree $T'$. Notice that $|V| = n =(k+1)|V(T')|$. We aim to show that $\gamma_{all,k}^\infty(T) = |V(T')|$. By Theorem~\ref{Thm: General Upper Bound}, $\gamma_{all,k}^\infty(T) \leq |V(T')|$ so all that remains to be  shown is that $\gamma_{all,k}^\infty(T) \geq |V(T')|$.

Note that in each distance-$k$ dominating set of $T$ for each path $P_v$ in $T$ appended to a vertex $v \in V(T')$ there must be at least $1$ guard in $P_v\cap \{v\}$ to distance-$k$ dominate the leaf at the end of $P_v$. As for each $u\neq v \in V(T')$, $(P_v \cup \{v\})\cap (P_u \cup \{u\}) = \emptyset$ this implies $\gamma_k(T) \geq |V(T')|$. Hence, $\gamma_{all,k}^\infty(T) \geq \gamma_k(T) \geq |V(T')|$ as required. This completes the proof.
\end{proof}

Now we construct a family of trees $\mathcal{T}_{m,k, \Delta}$ which all have eternal distance-$k$ domination number exactly $\frac{n}{1+ \sum_{i=1}^k\Delta(\Delta-1)^{i-1}}$ for odd $k$. Let $T_{k,\Delta}$ be the complete $(\Delta-1)$-ary tree of depth $\left \lfloor \frac{k}{2} \right \rfloor$ except the root vertex has $\Delta$ followers rather than $\Delta-1$. Let $T_{k,\Delta} \in \mathcal{T}_{m,k, \Delta}$, we complete out definition of $\mathcal{T}_{m,k, \Delta}$ as follows; for any $T_1,T_2 \in \mathcal{T}_{m,k, \Delta}$ let $T_3$ be any tree formed by adding an edge between two vertices of $T_1$ and $T_2$ which have degree strictly less than $\Delta$, then $T_3 \in \mathcal{T}_{m,k, \Delta}$. Notice that like $\mathcal{T}_{M,k}$, $\mathcal{T}_{m,k,\Delta}$ contains arbitrarily many vertices of high degree.

\begin{figure}
    \centering
    \includegraphics[scale = 0.4]{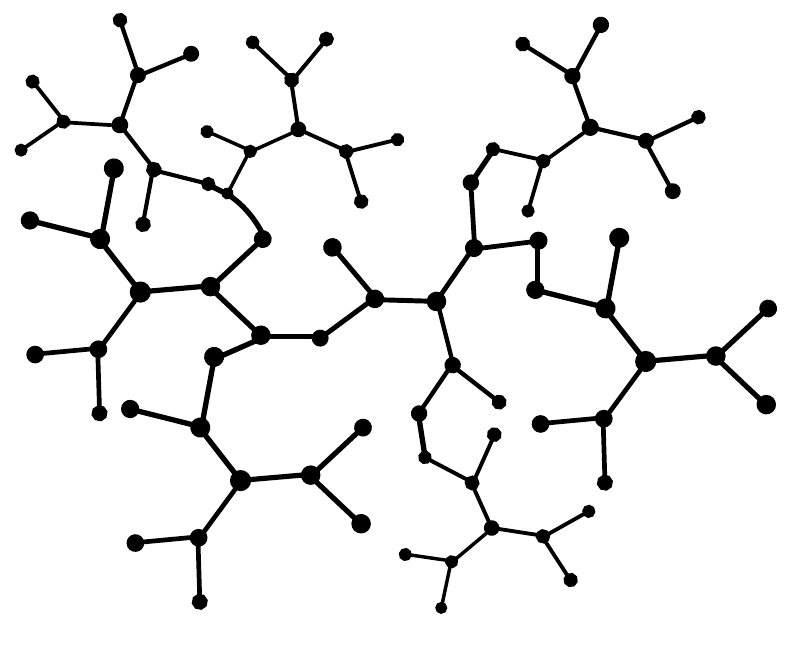}
    \caption{A tree in $\mathcal{T}_{m,5,3}$.}
    \label{fig:T_{m,k,D} example}
\end{figure}

\begin{theorem}
If $T \in \mathcal{T}_{m,k,\Delta}$, then $\gamma_{all,k}^\infty(T) = \frac{n}{1+ \sum_{i=1}^k\Delta(\Delta-1)^{i-1}}$.
\end{theorem}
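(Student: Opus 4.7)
The plan is to match the lower bound from Theorem~\ref{Thm: General Upper Bound}, which gives $\gamma_{all,k}^\infty(T) \geq \gamma_k(T) \geq n/M_k$, where $M_k := 1 + \sum_{i=1}^k \Delta(\Delta-1)^{i-1}$. Since this lower bound is immediate, the task reduces to producing an eternal distance-$k$ dominating strategy for $T$ using at most $n/M_k$ guards.

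I would induct on the number of $T_{k,\Delta}$ units used in the recursive construction of $T \in \mathcal{T}_{m,k,\Delta}$. For the base case $T = T_{k,\Delta}$, I observe that since $k$ is odd and the depth is $\lfloor k/2 \rfloor = (k-1)/2$, the diameter of $T_{k,\Delta}$ is at most $2\lfloor k/2 \rfloor = k-1 < k$. By Lemma~\ref{Lemma: Small Diameter Characterization}, a single guard suffices, so $\gamma_{all,k}^\infty(T_{k,\Delta}) = 1$.

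For the inductive step, suppose $T$ is formed by joining $T_1, T_2 \in \mathcal{T}_{m,k,\Delta}$ via an edge between vertices $u_i \in V(T_i)$ of degree less than $\Delta$. By induction, each $T_i$ admits an eternal distance-$k$ dominating set $D_i$ realizing the formula. The candidate $D := D_1 \cup D_2$ is a distance-$k$ dominating set of $T$, because every vertex of $T$ lies in some $T_i$ and is dominated by $D_i$, and $T_i$-distances coincide with $T$-distances for pairs within $T_i$ since the bridge edge is a cut edge of $T$. In response to an attack at $v \in V(T_i)$, the guards of $T_i$ redistribute using their $T_i$-eternal strategy while guards of $T_{3-i}$ remain stationary; validity in $T$ follows from the cut-edge observation, and the resulting configuration dominates each subtree and hence all of $T$. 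Thus $D$ is an eternal distance-$k$ dominating set of $T$ with $|D| = |D_1| + |D_2|$.

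The main obstacle, and the step I would check most carefully, is the arithmetic matching $|D_1| + |D_2|$ with $n/M_k$ at each inductive step. This reduces to confirming the base-case identity $\gamma_{all,k}^\infty(T_{k,\Delta}) = |V(T_{k,\Delta})|/M_k$, i.e., that each $T_{k,\Delta}$ unit contributes the correct share of vertices relative to $M_k$; this is precisely where the Moore-bound-achieving structure of the construction is verified via an explicit combinatorial identity involving the branching pattern of the $(\Delta-1)$-ary tree. Once the base count is pinned down, the induction closes cleanly because guards never need to cross the newly added bridge edge during any response, which is exactly what makes the combination of the two eternal strategies well-behaved in the larger tree.
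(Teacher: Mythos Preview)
Your approach is essentially identical to the paper's: both invoke Theorem~\ref{Thm: General Upper Bound} for the lower bound, handle the base case $T=T_{k,\Delta}$ via Lemma~\ref{Lemma: Small Diameter Characterization}, and for the inductive (equivalently, minimal-counterexample) step split $T$ along the newly added bridge edge into $T_1,T_2\in\mathcal{T}_{m,k,\Delta}$ and use the subadditivity $\gamma_{all,k}^\infty(T)\le\gamma_{all,k}^\infty(T_1)+\gamma_{all,k}^\infty(T_2)$. You spell out the combined strategy (guards in $T_i$ handle attacks in $T_i$ while the others stay put) and flag the base-case vertex count $|V(T_{k,\Delta})|=M_k$ as the point to verify carefully; the paper simply asserts both of these without further detail, so your write-up is, if anything, slightly more explicit.
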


\begin{proof}
Let $T = (V,E) \in \mathcal{T}_{m,k,\Delta}$. Theorem~\ref{Thm: General Upper Bound} implies that $\gamma_{all,k}^\infty(T) \geq \frac{n}{1+ \sum_{i=1}^k\Delta(\Delta-1)^{i-1}}$ so it is sufficient to show $\gamma_{all,k}^\infty(T) \leq \frac{n}{1+ \sum_{i=1}^k\Delta(\Delta-1)^{i-1}}$. Note that if $T = T_{k,\Delta}$, then the result follows from Lemma~\ref{Lemma: Small Diameter Characterization}. Suppose then that $T \neq T_{k,\Delta}$ is a smallest counterexample.

By the definition of $\mathcal{T}_{m,k,\Delta}$, there exists an edge $e \in E$ such that $T-e$ is a graph with two connected components $T_1$ and $T_2$ each of which is $\mathcal{T}_{m,k,\Delta}$. As $T$ is a smallest counterexample, $T_1$ and $T_2$ are not counterexamples. Thus, if $|V(T_1)| = a$ and $|V(T_1)| = b$, we know $n = a+b$ and $\gamma_{all,k}^{\infty}(T_1) = \frac{a}{1+ \sum_{i=1}^k\Delta(\Delta-1)^{i-1}}$ and $\gamma_{all,k}^{\infty}(T_2) = \frac{b}{1+ \sum_{i=1}^k\Delta(\Delta-1)^{i-1}}$. 

It is not hard to see that if as $V = V(T_1)\cup V(T_2)$ and $T_1$, $T_2$ are subgraphs of $T$, 
\begin{align*}
    \gamma_{all,k}^{\infty}(T) \leq \gamma_{all,k}^{\infty}(T_1) + \gamma_{all,k}^{\infty}(T_2) 
    = \frac{a}{1+ \sum_{i=1}^k\Delta(\Delta-1)^{i-1}}+\frac{b}{1+ \sum_{i=1}^k\Delta(\Delta-1)^{i-1}} \\
    = \frac{n}{1+ \sum_{i=1}^k\Delta(\Delta-1)^{i-1}}
\end{align*}
this completes the proof.
\end{proof}

\section{Conclusion}

We have resolved a number of questions regarding the eternal distance-$2$ domination number of trees, in particular several questions raised in \cite{cox2023eternal}. We do this by giving a polynomial time algorithm (Theorem~\ref{Thm: complexity}) for calculating the eternal distance-$2$ domination of a tree, then using the reductions involved in this algorithm we characterize which trees have domination number equal to their eternal distance-$2$ domination number (Theorem~\ref{Thm: gamma = gamma_(all,2)}). Beyond this we characterize which trees are eternal distance-$2$ domination critical (Theorem~\ref{Thm: Critical Trees}) and which trees have eternal distance-$2$ domination number equal to their distance-$2$ domination number (Theorem~\ref{Thm: gamma_2 = gamma_(all,2)}). These results are similar to work by Klostermeyer and MacGillivray \cite{klostermeyer2014eternal} who proved several characterizations for trees in the context of eternal domination number.

Additionally, we generalize upper bounds on the eternal domination number given by Chambers, Kinnersley, and Prince  \cite{chambers2006mobile} to the eternal distance-$k$ domination number. We also give a lower bound for the eternal distance-$k$ domination number in terms of order and maximum degree. To demonstrate that both of these bounds are tight we construct infinite families of tree that meet each bound. We conclude the paper by listing several open problems and conjectures.

The first of these conjectures arise from our observation that eternal distance-$k$ domination seems to have some fundamental differences in the $k \leq 2$ and $k>2$ cases. This is highlighted by the effectiveness of Lemma~\ref{Lemma: k-leaf Reduction} for $k \leq 2$ and subsequent failure for $k >2$. Despite this we conjecture the following.

\begin{conjecture}
    For all integers $k>2$, determining $\gamma_{all,k}^\infty(T)$ where $T$ is a tree is polynomial time bounded by a polynomial $p(n)$ independent of $k$.
\end{conjecture}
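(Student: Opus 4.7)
The plan is to develop a polynomial-time dynamic programming algorithm on the tree. The starting point is the identity $\gamma_{all,k}^\infty(T) = \gamma_{all,1}^\infty(T^k)$ used in Lemma~\ref{Lemma: Small Diameter Characterization}, which reduces the problem to eternal domination on the graph power $T^k$. Although $T^k$ is not itself a tree, it is a strongly chordal graph (in fact, a $k$-leaf power), and we still have the underlying tree $T$ to guide the algorithm; every edge of $T^k$ corresponds to a path of length at most $k$ in $T$, so locality in $T^k$ remains controlled by locality in $T$.

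First I would generalize the $k$-leaf reduction of Lemma~\ref{Lemma: k-leaf Reduction}. Instead of recording only whether a guard lies in $L[v]$, I would track where guards sit relative to the root of the subtree and which ``exchanges'' with the exterior are possible in a single move. More precisely, for a rooting of $T$ and each vertex $v$, I would define a \emph{boundary state} that records, for every vertex $u$ in the subtree at $v$ of depth at most $k$ from $v$, the minimum number of guards needed inside the subtree so that an eternal defense is possible conditional on a guard being available near $u$. Only vertices at depth at most $k$ can interact with the exterior in a single step, so the boundary of each subtree can be meaningfully summarized.

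Next I would design a DP recursion that combines the tables of the children of an internal vertex $v$, taking into account that guards may be routed from one child subtree to another through $v$ and its ancestors in a single response, provided the total path length stays at most $k$. Correctness would follow inductively: a matching argument should show that any joint eternal defense decomposes into per-subtree defenses whose boundary states are pairwise compatible, and conversely that any compatible family of boundary states can be recombined into a joint eternal defense. Particular care is needed on the adversarial side, since an attack deep in one subtree may draw a guard from a sibling subtree that is separated by up to $k$ edges via $v$.

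The main obstacle is controlling the number of distinct boundary states so that the algorithm runs in time $p(n)$ with $p$ \emph{independent} of $k$. Naively, the configurations within the top $k$ layers of a subtree can blow up exponentially in $k$. To avoid this I would prove structural dominance lemmas---analogous in spirit to those used by Klostermeyer and MacGillivray for eternal domination on trees---showing that one may restrict attention to a polynomial family of canonical configurations, for instance by pushing guards upward toward the subtree root whenever doing so does not harm defense. Formulating these canonical forms and proving they compose correctly under the $k$-step adversarial game, especially when $k$ approaches the diameter of $T$ and the ``boundary'' of a subtree becomes all of it, is the heart of the argument and where I expect the main technical difficulty to lie.
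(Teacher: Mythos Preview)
The statement you are attempting to prove is a \emph{conjecture} in the paper, not a theorem: the authors explicitly list it in the conclusion as an open problem, motivated precisely by the fact that their main tool, Lemma~\ref{Lemma: k-leaf Reduction}, breaks down for $k>2$ (see the discussion surrounding Figure~\ref{fig:k>2 problem}, where they exhibit a concrete tree for which the $k$-leaf reduction fails and note that it also invalidates Proposition~3 of \cite{cox2023eternal} for $k>2$). There is therefore no proof in the paper to compare against.

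As for your proposal itself, it is a research plan rather than a proof, and you essentially say so: you identify the correct obstruction (the boundary-state space may be exponential in $k$) and defer the key step (``structural dominance lemmas'' giving polynomially many canonical configurations) to future work. Two concrete concerns are worth flagging. First, the paper's counterexample shows that the guard serving $L[v]$ may profitably leave $L[v]$ because its potential replacement can be at distance up to $2k-2$ from the attacked vertex; this means any DP must track not just \emph{where} guards sit in the top $k$ layers but also which multi-step exchange patterns across siblings are realizable, and it is not clear that pushing guards toward the root is monotone for the defender in this richer game. Second, the known linear-time arguments for $\gamma_{all,1}^\infty$ on trees (Klostermeyer--MacGillivray) rely on tree-specific structure that $T^k$ does not possess, so invoking that line of work for $T^k$ would require new ideas beyond ``strongly chordal''. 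Until you can state and prove the canonical-form lemma with a bound independent of $k$, the proposal remains a plausible outline but not a proof of the conjecture.
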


Next we observe that Theorem~\ref{Thm: gamma_2 = gamma_(all,2)} combined with work in \cite{raczek2011graphs} implies that if $\gamma_2(T) = \gamma_{all,2}^\infty(T)$, then $\gamma_{all,2}^\infty(T) = \gamma(T)$. We conjecture that a similar result holds for $k>2$. Recall that $\gamma_k \leq \gamma_{all,k}^\infty \leq \gamma_{\left \lfloor \frac{k}{2} \right \rfloor}$ for all graphs.

\begin{conjecture}
For all $k>2$ and trees $T$, if $\gamma_k(T) = \gamma_{all,k}^\infty(T)$, then $ \gamma_{all,k}^\infty(T) = \gamma_{\left \lfloor \frac{k}{2} \right \rfloor} (T)$.
\end{conjecture}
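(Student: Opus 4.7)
The plan is to follow the architecture of the proof of Theorem~\ref{Thm: gamma_2 = gamma_(all,2)}, generalising it to arbitrary $k > 2$. Concretely, I would introduce a recursive family $\mathbb{T}_k$ of trees, an analog of Raczek's $\mathbb{T}$, such that $\mathbb{T}_k$ simultaneously characterises the trees with $\gamma_{\lfloor k/2 \rfloor}(T) = \gamma_k(T)$ and the trees with $\gamma_k(T) = \gamma_{all,k}^\infty(T)$. For $k=2$ both sets are exactly $\mathbb{T}$ (by Theorem~\ref{lemma: gamma=gamma_2} and Theorem~\ref{Thm: gamma_2 = gamma_(all,2)}), and the conjectured implication follows immediately from the chain $\gamma_k \le \gamma_{all,k}^\infty \le \gamma_{\lfloor k/2 \rfloor}$. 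For general $k$, the natural building operations should append paths of length roughly $1$, $2\lfloor k/2 \rfloor + 1$, and $2k$ onto stems, non-stem internal vertices, and stems respectively, mirroring $\mathbb{T}_1$, $\mathbb{T}_2$, $\mathbb{T}_3$.

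The easy direction proceeds by induction along the construction of $\mathbb{T}_k$. Each appending operation must be shown to preserve all three equalities $\gamma_{\lfloor k/2 \rfloor} = \gamma_k = \gamma_{all,k}^\infty$. This requires an upper bound on $\gamma_{all,k}^\infty$, obtained by exhibiting a concrete guard strategy that uses one extra guard per appended sub-path in the spirit of Case~1 of Lemma~\ref{Lemma: k-leaf Reduction}, together with matching lower bounds on $\gamma_k$ and $\gamma_{\lfloor k/2 \rfloor}$ coming from forcing arguments on private neighbourhoods of the newly created leaves and stems, as in Raczek's original analysis.

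The hard direction is the main content: showing every tree satisfying $\gamma_k(T) = \gamma_{all,k}^\infty(T)$ belongs to $\mathbb{T}_k$. I would adapt the inductive scheme used for Theorem~\ref{Thm: gamma_2 = gamma_(all,2)}: fix a longest path $v_0 v_1 \cdots v_m$, and analyse the local structure around $v_0$ to show that one of the three appending templates has been applied, so induction applies to the tree obtained by removing that template. The main obstacle is exactly the phenomenon flagged by the authors after Lemma~\ref{Lemma: k-leaf Reduction}: for $k > 2$, the clean identity $\gamma_{all,k}^\infty(T) = \gamma_{all,k}^\infty(T') + 1$ can fail, because a replacement guard may sit at distance up to $2k - 2$ from the attacked vertex rather than at distance at most $k$. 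To bypass this, one cannot rely on a black-box reduction; instead, one has to reason directly on guard configurations along the last $k + O(1)$ vertices of a longest path. The idea is that if the local attachment near $v_0$ is not one of the prescribed templates, then an explicit adversarial attack sequence walking along the tail of the longest path and branching into pendant twigs should either force an additional guard into the defending configuration, contradicting $\gamma_k(T) = \gamma_{all,k}^\infty(T)$, or pin the guards into positions which themselves witness $\gamma_{\lfloor k/2 \rfloor}(T) \le \gamma_k(T)$ on the reduced tree.

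A final subtlety concerns the base case. When $\mathrm{diam}(T) \le k$, Lemma~\ref{Lemma: Small Diameter Characterization} gives $\gamma_{all,k}^\infty(T) = \gamma_k(T) = 1$, while $\gamma_{\lfloor k/2 \rfloor}(T)$ can be strictly larger; the recursive family $\mathbb{T}_k$ therefore must either be equipped with a carefully chosen set of small-diameter seeds or the conjecture may require a mild hypothesis such as $\mathrm{diam}(T) \ge 2k$. Identifying the correct set of base cases, and verifying that the three appending operations preserve membership in both families above them, is where I expect most of the technical work to lie.
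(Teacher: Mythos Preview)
The statement you are addressing is a \emph{conjecture} in the paper, not a theorem; the authors provide no proof, and your proposal should be read as a research plan toward a possible resolution rather than as something to compare against an existing argument.

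That said, two substantive remarks on the plan itself. First, the ``final subtlety'' you raise is not a subtlety but an actual disproof of the conjecture as literally stated, at least for odd $k$. Take $k=3$ and $T=P_4$: then $\mathrm{diam}(T)=3\le k$, so by Lemma~\ref{Lemma: Small Diameter Characterization} we have $\gamma_{all,3}^\infty(T)=1=\gamma_3(T)$, yet $\gamma_{\lfloor 3/2\rfloor}(T)=\gamma(T)=2$. More generally, for any odd $k>2$ the path $P_{k+1}$ has radius $(k+1)/2>\lfloor k/2\rfloor$, giving $\gamma_{\lfloor k/2\rfloor}(P_{k+1})\ge 2$ while $\gamma_k=\gamma_{all,k}^\infty=1$. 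So the statement needs an additional hypothesis (e.g.\ $\mathrm{diam}(T)>k$, or restricting to even $k$) before any proof strategy can succeed; you should make this explicit rather than bury it as a base-case technicality.

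Second, even granting a corrected statement, the core of your outline is the ``hard direction,'' and there the plan is not yet a proof. You correctly diagnose that the $k$-leaf reduction of Lemma~\ref{Lemma: k-leaf Reduction} breaks for $k>2$ (the paper itself exhibits Figure~\ref{fig:k>2 problem} as a witness), and you propose to replace it by ``reasoning directly on guard configurations'' via adversarial attack sequences along the tail of a longest path. But this is precisely the step whose absence makes the statement a conjecture: no concrete attack sequence or replacement reduction is given, and the $2k-2$ versus $k$ gap means the swapping argument underpinning the $k=2$ case genuinely fails. Until you can exhibit, for at least one nontrivial template, an explicit sequence of attacks that forces $\gamma_{all,k}^\infty(T)>\gamma_k(T)$ whenever the template is violated, the inductive machinery has no engine.
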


The following are several questions which merit study in future work as they are natural to consider yet outside the scope of this paper.

\begin{question}
What is the maximum $m$ as a function of $n$, so that there exists a graph on $n$ vertices and $m$ edges with eternal distance-$k$ domination number $\left \lceil \frac{n}{k+1} \right \rceil$.
\end{question}

\begin{question}
Is $\gamma_{all,k}^\infty(T) \leq  \frac{n}{t}$  for some $t>k+1$ for all graphs with treewidth at least $N$ for some sufficiently large constant $N$?
\end{question}

\begin{question}
Does there exist a family of graphs $\mathcal{G}$ where determining $\gamma_{all,k}^\infty(G)$ is polynomial time but determining $\gamma_{all,t}^\infty(G)$ is not polynomial time for $G \in \mathcal{G}$ and some $k \neq t$. If so which families exhibit this dichotomy.
\end{question}

\vspace{0.5cm}
\section*{Acknowledgements}
We would like to acknowledge the support of the Natural Sciences and Engineering Research Council of Canada (NSERC) through the Canadian Graduate Scholarship -- Master's program.

\bibliographystyle{plain}

\end{document}